\documentclass[reqno]{amsart}

\usepackage{amsmath}
\usepackage{amsthm}
\usepackage{amsfonts}
\usepackage{amssymb}
\usepackage{enumerate}
\usepackage{graphicx}

\newcommand{\indicator}[1]{\ensuremath{\mathbf{1}_{\{#1\}}}}
\newcommand{\oindicator}[1]{\ensuremath{\mathbf{1}_{{#1}}}}

%\numberwithin{equation}{section}

\DeclareMathOperator{\tr}{tr}

\DeclareMathOperator{\diag}{diag}

\newcommand{\Prob}{\mathbb{P}}
\newcommand{\E}{\mathbb{E}}

\renewcommand\Re{\operatorname{Re}}
\renewcommand\Im{\operatorname{Im}}
\newcommand{\eps}{\varepsilon}

\newcommand{\U}{\mathrm{U}}
\newcommand{\Or}{\mathrm{O}}
\newcommand{\SO}{\mathrm{SO}}
\newcommand{\Sp}{\mathrm{Sp}}

\theoremstyle{plain}
  \newtheorem{theorem}{Theorem}

  \newtheorem{lemma}[theorem]{Lemma}
  \newtheorem{corollary}[theorem]{Corollary}

\theoremstyle{definition}
  \newtheorem{definition}[theorem]{Definition}
  
  \newtheorem{remark}[theorem]{Remark}

\begin{document}
\title[Critical points of random polynomials]{Critical points of random polynomials and characteristic polynomials of random matrices}

\author[S. O'Rourke]{Sean O'Rourke}
\address{Department of Mathematics, University of Colorado at Boulder, Boulder, CO 80309  }
\email{sean.d.orourke@colorado.edu}

\begin{abstract}
Let $p_n$ be the characteristic polynomial of an $n \times n$ random matrix drawn from one of the compact classical matrix groups.  We show that the critical points of $p_n$ converge to the uniform distribution on the unit circle as $n$ tends to infinity.  More generally, we show the same limit for a class of random polynomials whose roots lie on the unit circle.  Our results extend the work of Pemantle--Rivin \cite{PR} and Kabluchko \cite{K} to the setting where the roots are neither independent nor identically distributed.   
\end{abstract}

\maketitle

\section{Introduction}

A \emph{critical point} of a polynomial $f$ is a root of its derivative $f'$.  There are many results concerning the location of critical points of polynomials whose roots are known.  For example, the famous Gauss--Lucas theorem offers a geometric connection between the roots of a polynomial and the roots of its derivative.  

\begin{theorem}[Gauss--Lucas; Theorem 6.1 from \cite{M}] \label{thm:gauss}
If $f$ is a non-constant polynomial with complex coefficients, then all zeros of $f'$ belong to the convex hull of the set of zeros of $f$.  
\end{theorem}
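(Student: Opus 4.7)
The plan is to use the logarithmic derivative of $f$, which converts the product structure of the polynomial into a sum over roots and allows one to read off a convex combination directly. Write $f(z) = c \prod_{k=1}^{n}(z - z_k)$, where $z_1,\dots,z_n$ are the roots of $f$ listed with multiplicity, and compute
\[
\frac{f'(z)}{f(z)} = \sum_{k=1}^{n} \frac{1}{z - z_k}
\]
on the complement of $\{z_1,\dots,z_n\}$.

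Next, I would split into two cases depending on whether a given critical point $w$ (a root of $f'$) also satisfies $f(w) = 0$ or not. If $f(w) = 0$, then $w = z_k$ for some $k$, so $w$ trivially lies in the convex hull of $\{z_1,\dots,z_n\}$. Otherwise $f(w) \neq 0$, and the identity $f'(w) = 0$ yields
\[
\sum_{k=1}^{n} \frac{1}{w - z_k} = 0.
\]

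The key trick now is to clear denominators in a way that exposes real, nonnegative weights. Multiplying each term by the conjugate of its denominator gives
\[
0 = \sum_{k=1}^{n} \frac{\overline{w - z_k}}{|w - z_k|^{2}},
\]
so setting $\lambda_k := 1/|w - z_k|^{2} > 0$ and taking complex conjugates produces $\sum_k \lambda_k (w - z_k) = 0$. Solving for $w$ yields
\[
w = \frac{\sum_{k=1}^{n} \lambda_k z_k}{\sum_{k=1}^{n} \lambda_k},
\]
which exhibits $w$ as a convex combination of $z_1,\dots,z_n$, hence an element of their convex hull.

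The only real subtlety is the case separation at roots of $f$, since the logarithmic derivative is singular there; everything else is a routine manipulation. I would note, however, that this argument handles multiplicities automatically because roots are listed with multiplicity in the product and so contribute repeated terms (with the same weights $\lambda_k$) to the convex combination.
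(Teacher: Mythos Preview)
Your argument is correct and is in fact the classical proof of the Gauss--Lucas theorem. Note, however, that the paper does not supply its own proof of this result: Theorem~\ref{thm:gauss} is stated with a citation to Marden's monograph \cite{M} and used as a black box throughout. Your logarithmic-derivative argument is exactly the standard one found in such references, so there is nothing to compare against here.
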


There are also a number of refinements of Theorem \ref{thm:gauss}.  We refer the reader to \cite{Azero,Bzero,CM,Dzero,Drozero,GRR,Jzero,Mzero,Malzero,Mcon,Pzero,Rzero,Szero,Szero2,Smahler,Tzero} and references therein.  

Pemantle and Rivin \cite{PR} initiated the study of a probabilistic version of the Gauss--Lucas theorem.  In order to introduce their results, we fix the following notation.  For a polynomial $f$ of degree $n$, we define the empirical measure constructed from the roots of $f$ as 
$$ \mu_{f} := \frac{1}{n} \sum_{z \in \mathbb{C} : f(z) = 0} \mathcal{N}_f(z) \delta_{z}, $$
where $\mathcal{N}_f(z)$ is the multiplicity of the zero at $z$ and $\delta_z$ is the unit point mass at $z$.  

For an integer $k \geq 1$, we use the convention that
$$ \mu_{f}^{(k)} := \mu_{f^{(k)}}. $$
That is, $\mu_{f}^{(k)}$ is the empirical measure constructed from the roots of the $k$-th derivative of $f$.  Similarly, we write $\mu_f'$ to denote the empirical measure constructed from the critical points of $f$.   

Let $X_1, X_2,\ldots$ be independent and identically distributed (iid) random variables taking values in $\mathbb{C}$.  Let $\mu$ be the probability distribution of $X_1$.  For each $n \geq 1$, consider the polynomial
\begin{equation} \label{eq:pnprod}
	p_n(z) := (z-X_1) \cdots (z-X_n). 
\end{equation}
Pemantle and Rivin \cite{PR} show, assuming $\mu$ has finite one-dimensional energy, that $\mu'_{p_n}$ converges weakly to $\mu$ as $n$ tends to infinity.  

Let us recall what it means for a sequence of random probability measures to converge weakly. 

\begin{definition}[Weak convergence of random probability measures]
Let $T$ be a topological space (such as $\mathbb{R}$ or $\mathbb{C}$), and let $\mathcal{B}$ be its Borel $\sigma$-field.  Let $(\mu_n)_{n\geq 1}$ be a sequence of random probability measures on $(T,\mathcal{B})$, and let $\mu$ be a probability measure on $(T,\mathcal{B})$.  We say \emph{$\mu_n$ converges weakly to $\mu$ in probability} as $n \to \infty$ (and write $\mu_n \to \mu$ in probability) if for all bounded continuous $f:T \to \mathbb{R}$ and any $\eps > 0$,
$$ \lim_{n \to \infty} \Prob \left( \left| \int f d\mu_n - \int f d\mu \right| > \eps \right) = 0. $$
In other words, $\mu_n \to \mu$ in probability as $n \to \infty$ if and only if $\int f d\mu_n \to \int f d\mu$ in probability for all bounded continuous $f: T \to \mathbb{R}$.  Similarly, we say \emph{$\mu_n$ converges weakly to $\mu$ almost surely} as $n \to \infty$ (and write $\mu_n \to \mu$ almost surely) if for all bounded continuous $f:T \to \mathbb{R}$,
$$ \lim_{n \to \infty} \int f d\mu_n = \int f d\mu $$
almost surely.   
\end{definition}

Kabluchko \cite{K} generalized the results of Pemantle and Rivin to the following.  

\begin{theorem}[Kabluchko] \label{thm:PMK}
Let $\mu$ be any probability measure on $\mathbb{C}$.  Let $X_1, X_2, \ldots$ be a sequence of iid random variables with distribution $\mu$.  For each $n \geq 1$, let $p_n$ be the degree $n$ polynomial given in \eqref{eq:pnprod}.  Then $\mu'_{p_n}$ converges weakly to $\mu$ in probability as $n \to \infty$.  
\end{theorem}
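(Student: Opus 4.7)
My approach uses the logarithmic potential method, a standard tool for establishing weak convergence of empirical measures on $\C$. Recall that for a probability measure $\nu$ on $\C$, its logarithmic potential is $U_\nu(z) := -\int \log|z-w|\,d\nu(w)$; convergence $U_{\nu_n}(z) \to U_\nu(z)$ at Lebesgue-a.e.\ $z \in \C$, together with tightness of $(\nu_n)$, is known to imply weak convergence $\nu_n \to \nu$.

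The first step is to relate $U_{\mu'_{p_n}}$ to $U_{\mu_{p_n}}$ via the normalized logarithmic derivative
\[
L_n(z) := \frac{1}{n}\frac{p_n'(z)}{p_n(z)} = \frac{1}{n}\sum_{i=1}^n \frac{1}{z-X_i}.
\]
Since $p_n$ is monic of degree $n$ and $p_n'$ has leading coefficient $n$, taking logarithms of the identity $p_n'(z) = n L_n(z) p_n(z)$ gives
\[
(n-1)\,U_{\mu'_{p_n}}(z) \;=\; n\,U_{\mu_{p_n}}(z) \;-\; \log|L_n(z)|
\]
after absorbing $\log n$ into the leading-coefficient normalization. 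Dividing by $n-1$, the result will follow once I show that both $U_{\mu_{p_n}}(z) \to U_\mu(z)$ and $\tfrac{1}{n}\log|L_n(z)| \to 0$ in probability for a.e.\ $z$.

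For the first convergence, the weak law of large numbers applied to the iid random variables $\log|z-X_i|$ gives $U_{\mu_{p_n}}(z) \to U_\mu(z)$ in probability for every $z$ outside the polar set where $U_\mu(z) = +\infty$. For $\tfrac{1}{n}\log|L_n(z)|$, the upper bound is essentially free: one writes $|L_n(z)| \leq (\min_i |z - X_i|)^{-1}$, and for typical $z$ the nearest-neighbor distance is of polynomial order in $n$, so $\log|L_n(z)| = O(\log n)$ with high probability. The real difficulty is the matching lower bound, which is a quantitative anti-concentration statement for the Cauchy-type sum $L_n(z)$: I must show $\P(|L_n(z)| < e^{-n\eps}) \to 0$ for every $\eps > 0$. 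This is the main obstacle. I expect to handle it by exploiting the fact that the complex random variable $(z-X_1)^{-1}$ has a genuinely two-dimensional non-degenerate distribution for a.e.\ $z$, so the normalized sum $L_n(z)$ has enough anti-concentration (e.g., via a bounded-density or small-ball estimate for sums of iid variables in $\C$) to rule out the exceptional scale $e^{-n\eps}$.

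Finally, once I have $U_{\mu'_{p_n}}(z) \to U_\mu(z)$ in probability for a.e.\ $z$, I would conclude by combining this with tightness and the standard uniqueness/continuity theorem for log potentials (applying it along subsequences to pass from in-probability convergence to weak convergence in probability). Tightness of $(\mu'_{p_n})$ is automatic from Theorem~\ref{thm:gauss}: the critical points lie in the convex hull of $\{X_1,\dots,X_n\}$, and tightness of $\mu$ ensures that with high probability all $X_i$, and hence this convex hull, lie inside a large fixed disk. The anti-concentration step for $L_n(z)$ is where the argument is most delicate and where $\mu$'s complete lack of regularity (beyond being a probability measure) imposes the most constraints.
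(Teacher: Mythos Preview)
The paper does not prove Theorem~\ref{thm:PMK}; it is quoted from Kabluchko~\cite{K}. Your outline is essentially Kabluchko's strategy, and the paper adapts the same machinery in Section~\ref{sec:lemmas} for its own results (see the discussion around~\eqref{eq:log} and Lemmas~\ref{lemma:pointwise}--\ref{lemma:tight}). Two points in your plan need correction.

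First, routing the argument through $U_{\mu_{p_n}}\to U_\mu$ breaks down for general $\mu$: the theorem carries no moment assumption, so if $\int\log(1+|w|)\,d\mu(w)=\infty$ then $\E|\log|z-X_1||=\infty$, the weak law for $U_{\mu_{p_n}}(z)$ fails, and $U_\mu(z)$ itself may be ill-defined. Kabluchko (and the paper) avoid this by never comparing to $U_\mu$. Instead one integrates the distributional identity $\tfrac{1}{2\pi}\Delta\log|L_n|=\mu'_{p_n}-\mu_{p_n}$ (up to normalization) against $\Delta\varphi$ for a smooth compactly supported $\varphi$, obtaining $\int\varphi\,d\mu'_{p_n}-\int\varphi\,d\mu_{p_n}=\tfrac{1}{2\pi n}\int(\log|L_n|)\,\Delta\varphi\,d\lambda$, so only the \emph{difference} of the two empirical measures appears; convergence $\mu_{p_n}\to\mu$ then follows separately from the ordinary law of large numbers for bounded test functions.

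Second, your closing step---pointwise a.e.\ convergence of potentials plus tightness implies weak convergence---is not a black box. Passing from $\tfrac{1}{n}\log|L_n(z)|\to 0$ in probability for a.e.\ $z$ to convergence of the integral above requires uniform integrability. Both Kabluchko and the paper invoke the Tao--Vu device (Lemma~\ref{lemma:TVtight} here), which in turn demands that $\tfrac{1}{n^2}\int_K\log^2|L_n|\,d\lambda$ be tight over compacta $K$. This tightness (Lemma~\ref{lemma:tight}) is a separate, nontrivial estimate via the Poisson--Jensen formula, and it is precisely where your anti-concentration input on $L_n$ is consumed---not only pointwise for the lower bound on $|L_n(z)|$, but also globally to control $I_n(0)$. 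Your plan identifies the right difficulty but underestimates the work needed to close it.
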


The following corollary of Theorem \ref{thm:PMK} will be relevant to this note.

\begin{corollary} \label{cor:PMK}
Let $\theta_1, \theta_2, \ldots$ be a sequence of iid random variables distributed uniformly on $[0,2\pi)$.  For each $n \geq 1$, let 
$$ p_n(z) := \prod_{j=1}^n (z - e^{i \theta_j}). $$
Then $\mu'_{p_n}$ converges in probability to the uniform probability distribution on the unit circle centered at the origin in the complex plane as $n \to \infty$.  
\end{corollary}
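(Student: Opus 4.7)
The plan is to recognize Corollary \ref{cor:PMK} as essentially immediate from Theorem \ref{thm:PMK}, so the proof proposal reduces to verifying that the hypotheses of Theorem \ref{thm:PMK} are satisfied with the right choice of underlying distribution $\mu$.

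First, I would set $X_j := e^{i \theta_j}$ for each $j \geq 1$. Since $\theta_1, \theta_2, \ldots$ are iid, the random variables $X_1, X_2, \ldots$ are iid random variables taking values in $\mathbb{C}$; in fact, they take values on the unit circle $S^1 := \{z \in \mathbb{C} : |z| = 1\}$. I would let $\mu$ denote the common distribution of the $X_j$. For any bounded continuous $g : \mathbb{C} \to \mathbb{R}$,
\[
\int g \, d\mu = \E[g(e^{i \theta_1})] = \frac{1}{2\pi} \int_0^{2\pi} g(e^{i \theta}) \, d\theta,
\]
which is exactly the integral of $g$ against the uniform probability measure on $S^1$. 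Hence $\mu$ is the uniform probability measure on the unit circle centered at the origin.

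Next, with this choice of $X_j$, the polynomial
\[
p_n(z) = \prod_{j=1}^{n} (z - e^{i \theta_j}) = \prod_{j=1}^{n} (z - X_j)
\]
is precisely of the form \eqref{eq:pnprod}. Applying Theorem \ref{thm:PMK} directly yields that $\mu'_{p_n}$ converges weakly to $\mu$ in probability as $n \to \infty$, which is the desired conclusion.

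There is no real obstacle here: the only verification needed is the elementary fact that $e^{i \theta}$ is uniformly distributed on $S^1$ when $\theta$ is uniform on $[0, 2\pi)$, and the observation that \eqref{eq:pnprod} permits $\mu$ to be supported on the unit circle (Theorem \ref{thm:PMK} places no restriction on the support of $\mu$ beyond being a probability measure on $\mathbb{C}$). Thus the corollary is recorded less for its difficulty than to highlight the specific case that motivates the extensions pursued in the remainder of the paper, where the roots are no longer required to be independent or identically distributed.
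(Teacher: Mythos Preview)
Your proposal is correct and matches the paper's approach: the corollary is stated as an immediate consequence of Theorem~\ref{thm:PMK}, obtained exactly as you describe by taking $X_j = e^{i\theta_j}$ so that $\mu$ is the uniform distribution on the unit circle. The paper also notes later that the same result can alternatively be recovered from Theorem~\ref{thm:alternative} (using anti-concentration bounds and the law of large numbers to verify the hypotheses), but that is a secondary remark rather than the intended proof.
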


Corollary \ref{cor:PMK} also follows from the work of Subramanian in \cite{S}.

\section{Main Results}

The goal of this note is to prove a version of Theorem \ref{thm:PMK} when the random variables $X_1, X_2, \ldots$ are neither independent nor identically distributed.  Of particular interest will be the case when the roots of $p_n$ are eigenvalues of a random matrix.  

The \emph{eigenvalues} of a square matrix $M$ are the zeros of its characteristic polynomial $p_M(z) := \det(zI - M)$, where $I$ denotes the identity matrix.  We let $\mu_M$ denote the empirical spectral measure of $M$.  That is, $\mu_M$ is the empirical measure constructed from the roots of the characteristic polynomial $p_M$.  Similarly, we let $\mu_M'$ be the empirical measure constructed from the roots of $p'_M$.  

As a motivating example, we begin with the case when $M$ is Hermitian.

\subsection{Characteristic polynomials of Hermitian random matrices}

If the matrix $M$ is Hermitian (that is, $M = M^\ast$, where $M^\ast$ denotes the conjugate transpose of $M$), then the eigenvalues of $M$ are real and $\mu_M$ is a probability measure on the real line.  In this case, Theorem \ref{thm:hermitian} below describes the well-known connection between $\mu_M$ and $\mu_M'$.  Before stating the result, we first recall the following definition.  

\begin{definition}[L\'{e}vy distance]
Let $\mu$ and $\nu$ be two probability measures on the real line with cumulative distribution functions $F$ and $G$ respectively.  Then the \emph{L\'{e}vy distance} $L(\mu, \nu)$ between $\mu$ and $\nu$ is given by
$$ L(\mu, \nu) := \inf \{ \eps \geq 0 : G(x - \eps) - \eps \leq F(x) \leq G(x + \eps) + \eps \text{ for all } x \in \mathbb{R} \}. $$
\end{definition}

It is well-known, for measures on the real line, that convergence in L\'{e}vy distance is equivalent to convergence in distribution; we refer the reader to \cite[Chapter 13.2]{Kprob} and \cite[Exercise 13.2.6]{Kprob} for further details.  

\begin{theorem} \label{thm:hermitian}
For each $n \geq 1$, let $X_n$ be a $n \times n$ random Hermitian matrix.  Then $\mu_{X_n}'$ is a random probability measure on the real line and 
$$ L(\mu_{X_n}, \mu_{X_n}') \longrightarrow 0 $$
almost surely as $n \to \infty$
\end{theorem}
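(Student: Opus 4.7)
The plan is to reduce the assertion to a deterministic, pointwise statement about the CDFs of $\mu_{X_n}$ and $\mu_{X_n}'$, exploiting the fact that for Hermitian matrices the characteristic polynomial has only real roots. In particular, the convergence will be almost sure essentially trivially, because the bound we obtain will be deterministic in $n$.

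First I would note that if $X_n$ is Hermitian, then $p_{X_n}$ is a real-rooted polynomial of degree $n$. Hence $p_{X_n}'$ has only real roots as well: by Rolle's theorem, between any two consecutive (distinct) eigenvalues of $X_n$ lies at least one critical point, and counting multiplicities gives all $n-1$ real critical points. Writing the eigenvalues as $\lambda_1 \leq \lambda_2 \leq \cdots \leq \lambda_n$ and the critical points as $\eta_1 \leq \eta_2 \leq \cdots \leq \eta_{n-1}$, the classical interlacing property
\[
\lambda_1 \leq \eta_1 \leq \lambda_2 \leq \eta_2 \leq \cdots \leq \eta_{n-1} \leq \lambda_n
\]
holds. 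In particular, $\mu_{X_n}'$ is supported on $\mathbb{R}$ and is thus a (random) probability measure on the real line.

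Next, I would translate interlacing into a uniform bound on the Kolmogorov distance between the CDFs $F_n$ of $\mu_{X_n}$ and $G_n$ of $\mu_{X_n}'$. Fix $x \in \mathbb{R}$ and let $k = \#\{i : \lambda_i \leq x\}$, so $F_n(x) = k/n$. By interlacing, the number of critical points at most $x$ is either $k-1$, $k$, or (in case of coincidences) close to these values, and in any case lies in $\{k-1, k\}$. Thus $G_n(x) \in \{(k-1)/(n-1),\, k/(n-1)\}$, and a short computation shows
\[
|F_n(x) - G_n(x)| \leq \frac{1}{n-1}
\]
uniformly in $x$ (and uniformly in the realization of $X_n$). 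Since Kolmogorov distance dominates Lévy distance, this yields the deterministic estimate $L(\mu_{X_n}, \mu_{X_n}') \leq 1/(n-1)$, from which the almost sure convergence is immediate.

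There is no real obstacle here; the only point requiring care is to handle the possibility of repeated eigenvalues cleanly, so that the counting argument in the interlacing step remains valid and produces $n-1$ real critical points counted with multiplicity. This is a standard consequence of combining Rolle's theorem with the fact that a zero of $p_{X_n}$ of multiplicity $m \geq 2$ is automatically a zero of $p_{X_n}'$ of multiplicity $m-1$.
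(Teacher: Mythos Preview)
Your proof is correct and follows essentially the same approach as the paper: both exploit the interlacing of the real zeros of $p_{X_n}$ and $p_{X_n}'$ to bound the discrepancy in counting functions by $1$, and then pass to L\'{e}vy distance. The paper invokes the Gauss--Lucas theorem and cites \cite[Lemma B.18]{BSbook} for the last step, whereas you obtain interlacing via Rolle's theorem and compute the Kolmogorov (hence L\'{e}vy) bound $1/(n-1)$ directly, making the argument slightly more self-contained.
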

\begin{proof}
Since the eigenvalues of $X_n$ are real, the Gauss--Lucas theorem (Theorem \ref{thm:gauss}) guarantees that $\mu'_{X_n}$ is a probability measure on the real line.  

Let $I \subset \mathbb{R}$ be an interval.  Let $N_I$ denote the number of zeros of $p_{X_n}$ in $I$ (i.e. the number of eigenvalues of $X_n$ in $I$), and let $N_I'$ denote the number of critical points of $p_{X_n}$ in $I$.  Since the zeros of $p'_{X_n}$ interlace the zeros of $p_{X_n}$, we have
$$ \left| N_I - N_I' \right| \leq 1, $$
and the claim follows from \cite[Lemma B.18]{BSbook}.  
\end{proof}

As a concrete example, we present the following corollary for Wigner random matrices.  

\begin{corollary} \label{cor:wigner}
Let $\xi$ be a complex-valued random variable with unit variance, and let $\zeta$ be a real-valued random variable.  For each $n \geq 1$, let $X_n$ be a $n \times n$ Hermitian matrix whose diagonal entries are iid copies of $\zeta$, those above the diagonal are iid copies of $\xi$, and all the entries on and above the diagonal are independent.  Then $\mu_{\frac{1}{\sqrt{n}} X_n}'$ is a probability measure on the real line and 
$$ \mu_{\frac{1}{\sqrt{n}} X_n}' \longrightarrow \mu_{\mathrm{sc}} $$ 
almost surely as $n \to \infty$, where $\mu_{\mathrm{sc}}$ is the measure on the real line with density
\begin{equation*} 
	\rho_{\mathrm{sc}}(x) := \left\{
     		\begin{array}{ll}
		\frac{1}{2 \pi} \sqrt{4-x^2}, &|x| \leq 2\\
		0, &|x| > 2.
		\end{array}
   	\right. 
\end{equation*}
\end{corollary}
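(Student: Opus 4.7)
The plan is to combine Theorem \ref{thm:hermitian} with the classical Wigner semicircle law via the triangle inequality for L\'{e}vy distance. Since $\frac{1}{\sqrt{n}} X_n$ is Hermitian, Theorem \ref{thm:hermitian} applied directly to $\frac{1}{\sqrt{n}} X_n$ gives
$$ L\!\left( \mu_{\frac{1}{\sqrt{n}} X_n}, \ \mu'_{\frac{1}{\sqrt{n}} X_n} \right) \longrightarrow 0 $$
almost surely as $n \to \infty$. Under the moment hypotheses of the corollary, Wigner's semicircle theorem yields $\mu_{\frac{1}{\sqrt{n}} X_n} \to \mu_{\mathrm{sc}}$ weakly almost surely; since both measures live on the real line, this is equivalent to $L(\mu_{\frac{1}{\sqrt{n}} X_n}, \mu_{\mathrm{sc}}) \to 0$ almost surely. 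The triangle inequality for the L\'{e}vy metric then forces $L(\mu'_{\frac{1}{\sqrt{n}} X_n}, \mu_{\mathrm{sc}}) \to 0$ almost surely, which, by the equivalence between L\'{e}vy convergence and weak convergence on $\mathbb{R}$, is precisely the desired conclusion.

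The proof therefore has just three ingredients: (i) the interlacing statement packaged in Theorem \ref{thm:hermitian}, (ii) the semicircle law for Hermitian Wigner matrices invoked as a black box, and (iii) the triangle inequality for $L$. The one detail worth pausing on is verifying that the semicircle law is applicable in the stated generality. For iid off-diagonal entries of unit variance and iid real diagonal entries, the classical statement gives the desired almost sure weak convergence to $\mu_{\mathrm{sc}}$; any fixed mean of $\xi$ or $\zeta$ can be absorbed by a deterministic rank-one or diagonal perturbation of $\frac{1}{\sqrt{n}} X_n$, which is negligible for the empirical spectral distribution in the limit (and is in any case handled by standard reductions in the Wigner literature).

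I do not foresee any substantive obstacle. The hard content has been concentrated in Theorem \ref{thm:hermitian} (which uses eigenvalue interlacing for Hermitian matrices, a special feature of the real line) and in Wigner's semicircle law, and Corollary \ref{cor:wigner} is essentially a direct composition of these two inputs through the triangle inequality. If anything counts as the main point of care, it is simply the book-keeping: making sure one applies Theorem \ref{thm:hermitian} to the normalized matrix $\frac{1}{\sqrt{n}} X_n$ rather than to $X_n$ itself, so that the two L\'{e}vy-distance statements can be chained together.
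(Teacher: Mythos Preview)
Your proposal is correct and follows essentially the same approach as the paper: invoke Wigner's semicircle law (the paper cites \cite[Theorem 2.5]{BSbook}) to obtain $L(\mu_{\frac{1}{\sqrt{n}} X_n}, \mu_{\mathrm{sc}}) \to 0$ almost surely, combine with Theorem \ref{thm:hermitian}, and conclude via the triangle inequality for L\'{e}vy distance. The only cosmetic difference is that the paper states the semicircle input first and then applies Theorem \ref{thm:hermitian}, whereas you reverse the order.
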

\begin{proof}
In view of \cite[Theorem 2.5]{BSbook} and \cite[Exercise 13.2.6]{Kprob}, we have
$$ L \left( \mu_{\frac{1}{\sqrt{n}} X_n}, \mu_{\mathrm{sc}} \right) \longrightarrow 0 $$
almost surely as $n \to \infty$.  Thus, the claim follows from Theorem \ref{thm:hermitian} by applying the triangle inequality for L\'{e}vy distance.   
\end{proof}

The same arguments can also be used to generalize Theorem \ref{thm:hermitian} and Corollary \ref{cor:wigner} to higher-order derivatives.

\subsection{Random matrices from the compact classical groups}

In this note, we extend Theorem \ref{thm:hermitian} to random matrices which are not Hermitian.  In particular, we consider random matrices distributed according to Haar measure on the compact classical matrix groups.  We begin by recalling some definitions.  

\begin{definition}[Compact classical matrix groups] \hfill
\begin{enumerate}
\item An $n \times n$ matrix $M$ over $\mathbb{R}$ is \emph{orthogonal} if 
$$ M M^\mathrm{T} = M^\mathrm{T} M = I_n, $$
where $I_n$ denotes the $n \times n$ identity matrix and $M^\mathrm{T}$ is the transpose of $M$.  The set of $n \times n$ orthogonal matrices over $\mathbb{R}$ is denoted by $\Or(n)$.  
\item The set $\SO(n) \subset \Or(n)$ of \emph{special orthogonal matrices} is defined by 
$$ \SO(n) := \{ M \in \Or(n) : \det(M) = 1 \}. $$
\item An $n \times n$ matrix $M$ over $\mathbb{C}$ is \emph{unitary} if
$$ M M^\ast = M^\ast M = I_n, $$
where $M^\ast$ denotes the conjugate transpose of $M$.  The set of $n \times n$ unitary matrices over $\mathbb{C}$ is denoted $\U(n)$.  
\item If $n$ is even, we say an $n \times n$ matrix $M$ over $\mathbb{C}$ is \emph{symplectic} if $M \in \U(n)$ and
$$ M J M^\ast = M^\ast J M = J, $$
where
$$ J := \begin{bmatrix} 0 & I_{n/2} \\ -I_{n/2} & 0 \end{bmatrix}. $$
The set of $n \times n$ symplectic matrices over $\mathbb{C}$ is denoted $\Sp(n)$.  
\end{enumerate}
\end{definition}

Recall that if $M$ is a matrix from one of the compact matrix groups introduced above, then the eigenvalues of $M$ all lie on the unit circle in the complex plane centered at the origin.  

For any compact Lie group $G$, there exists a unique translation-invariant probability measure on $G$ called \emph{Haar measure}; see, for example, \cite[Chapter 2.2]{F}.  In this note, we will be interested in the case when $G$ is one of classical compact matrix groups defined above.  

For the compact matrix groups, there are a number of intuitive ways to describe a matrix distributed according to Haar measure.  Recall that a complex standard normal random variable $Z$ can be represented as $Z = X + iY$, where $X$ and $Y$ are independent real normal random variables with mean zero and variance $1/2$.   Form an $n \times n$ random matrix with independent complex standard normal entries and perform the Gram--Schmidt algorithm on the columns.  The result is a random unitary matrix distributed according to Haar measure on $\U(n)$.  Indeed, invariance follows from the invariance of complex Gaussian random vectors under $\U(n)$.  Similar Gaussian constructions yield random matrices distributed according to Haar measure on the other compact matrix groups.  

We now present our main result for the classical compact matrix groups.  

\begin{theorem} \label{thm:compact}
For each $n \geq 1$, let $M_n$ be an $n \times n$ matrix Haar distributed on $\Or(n)$, $\SO(n)$, $\U(n)$, or $\Sp(n)$.  Then $\mu_{M_n}'$ converges in probability as $n \to \infty$ to the uniform probability distribution on the unit circle centered at the origin.  
\end{theorem}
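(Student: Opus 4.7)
The plan is to follow the logarithmic potential strategy underlying Kabluchko's proof of Theorem \ref{thm:PMK}, adapted to the dependent eigenvalue setting of compact matrix groups. Denote by $\mu_\circ$ the uniform probability measure on the unit circle, and for a compactly supported probability measure $\mu$ on $\mathbb{C}$ write $U_\mu(z) := \int \log|z-w|\,d\mu(w)$ for its logarithmic potential. By Gauss--Lucas, $\mu'_{M_n}$ is supported in the closed unit disk (hence the sequence is tight); combined with the distributional identity $\mu = \tfrac{1}{2\pi}\Delta U_\mu$ and the continuity of the distributional Laplacian on $L^1_{\mathrm{loc}}$, the conclusion $\mu'_{M_n} \to \mu_\circ$ in probability will follow once we establish
\begin{equation}\label{eq:planL1loc}
U_{\mu'_{M_n}} \longrightarrow U_{\mu_\circ} \quad \text{in } L^1_{\mathrm{loc}}(\mathbb{C}) \text{ in probability.}
\end{equation}

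For $z$ off the spectrum of $M_n$, the identity $p'_{M_n}(z)/p_{M_n}(z) = \tr((zI-M_n)^{-1}) =: T_n(z)$ yields
\begin{equation}\label{eq:planiden}
(n-1)\,U_{\mu'_{M_n}}(z) \;=\; n\,U_{\mu_{M_n}}(z) \;+\; \log|T_n(z)|,
\end{equation}
reducing \eqref{eq:planL1loc} to the two claims (I) $U_{\mu_{M_n}} \to U_{\mu_\circ}$ in $L^1_{\mathrm{loc}}$ in probability, and (II) $\tfrac{1}{n-1}\log|T_n| \to 0$ in $L^1_{\mathrm{loc}}$ in probability. Claim (I) follows from the classical weak convergence $\mu_{M_n} \to \mu_\circ$ in probability on each of $\Or(n)$, $\SO(n)$, $\U(n)$, $\Sp(n)$ (a consequence of Diaconis--Shahshahani-type trace moment computations and their analogues on the orthogonal and symplectic groups), combined with a uniform integrability argument absorbing the logarithmic singularity; the key input is that the one-point marginal of an eigenvalue of $M_n$ has density bounded uniformly in $n$ with respect to arc length on the unit circle (up to a vanishing atom at $\pm 1$ in the orthogonal case), which gives $\sup_n \E|U_{\mu_{M_n}}(z)|^{1+\delta} \le C_K < \infty$ uniformly in $z$ on any compact $K$.

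Claim (II) is the main obstacle. By Fubini and Markov it suffices to show $\E\bigl|\log|T_n(z)|\bigr| = O(\log n)$ uniformly for $z$ in any compact set $K$ avoiding the unit circle (the unit circle itself is planar Lebesgue null and contributes nothing in $L^1_{\mathrm{loc}}$). For the upper tail, the series expansions
\[
T_n(z) = \sum_{k\ge 0} z^{-k-1}\,\tr(M_n^k) \;\; (|z|>1), \qquad T_n(z) = -\sum_{k\ge 0} z^k\,\tr(M_n^{-k-1}) \;\; (|z|<1),
\]
together with the Diaconis--Shahshahani variance bound $\var(\tr(M_n^{\pm k})) = O(\min(k,n))$ on each of the four groups, give $\E|T_n(z)|^2 \le C n^2$ uniformly on $K$, whence $\E \log^+|T_n(z)| = O(\log n)$. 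For the lower tail, the same expansions combined with the joint convergence of $(\tr(M_n^k))_{k \ge 1}$ to independent complex Gaussians of variance $k$ show that $T_n(z)$ converges in distribution to a Gaussian analytic function whose value at any fixed $z \in K$ is a nondegenerate complex Gaussian; a quantitative form of this limit, exploiting the determinantal (respectively Pfaffian) structure of the eigenvalue ensembles to control characteristic functions, is expected to yield the uniform anti-concentration bound $\sup_n \P(|T_n(z)| < \eps) \le C \eps^{\alpha}$ for $z \in K$, and hence $\E \log^-|T_n(z)| = O(1)$. Inserting (I) and (II) into \eqref{eq:planiden} gives \eqref{eq:planL1loc}, completing the proof.
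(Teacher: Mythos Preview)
Your overall strategy --- reducing to the logarithmic potential via the identity $(n-1)U_{\mu'_{M_n}}=nU_{\mu_{M_n}}+\log|T_n|+O(1)$ and then controlling $\tfrac{1}{n-1}\log|T_n|$ --- is essentially the architecture the paper uses to prove Theorem~\ref{thm:main}, from which Theorem~\ref{thm:compact} is derived. The paper likewise splits into pointwise convergence of $\tfrac1n\log|L_n(z)|$ and an integrability estimate (Lemmas~\ref{lemma:pointwise} and~\ref{lemma:tight}, combined via Lemma~\ref{lemma:TVtight}). However, two points in your execution are genuinely incomplete.

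The principal gap is the anti-concentration step. You write that the determinantal/Pfaffian structure ``is expected to yield'' $\sup_n \P(|T_n(z)|<\eps)\le C\eps^\alpha$, but no argument is given, and a uniform-in-$n$ polynomial small-ball bound does not follow in any obvious way from determinantal structure alone. The paper's substitute is concrete: it invokes the D\"obler--Stolz quantitative CLT (Theorem~\ref{thm:CLT}), which couples the vector $(\tr M_n, \ldots, \tr M_n^{N+1})$, $N=\lfloor\log^2 n\rfloor$, to a Gaussian vector at Wasserstein distance $O(N^{7/2}/n)$. After truncating the geometric series for $T_n(z)$ at $N$ terms (the tail is $O(n(1-\eps)^N)$ for $|z|\le 1-\eps$), anti-concentration reduces to the trivial Gaussian small-ball estimate. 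Without this or an equivalent quantitative input your lower-tail bound is unproved. Note also that the bound you seek, $\E\log^-|T_n(z)|=O(1)$, is strictly stronger than what the paper establishes or needs: the paper requires only $\lim_{\delta\searrow 0}\limsup_n\P(|T_n(z)|\le\delta)=0$ for the pointwise convergence, and handles integrability in $z$ separately via a Poisson--Jensen argument (Lemma~\ref{lemma:tight}).

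The second issue is architectural. You aim for $L^1_{\mathrm{loc}}(\C)$ convergence of $\tfrac{1}{n-1}\log|T_n|$ and dismiss the unit circle as Lebesgue-null, but your bounds are asserted only uniformly on compacta \emph{avoiding} the circle, with no control of the constants as $z$ approaches $S^1$; as written this does not yield $L^1$ convergence on a compact set meeting $S^1$. The paper sidesteps this entirely: the log-potential argument is run only for test functions supported in $\mathbb{D}_{1-\eps}$ (Lemma~\ref{lemma:main}), so that the roots $e^{i\theta_j^{(n)}}$ drop out and one concludes merely that no mass of $\mu'_{M_n}$ accumulates strictly inside the disk. Full weak convergence is then obtained by a different mechanism (Lemma~\ref{lemma:radii}): the radii of the critical points tend to $1$ in mean, and the angular distribution is identified by transferring the trace-moment condition $\tfrac1n\sum_j e^{im\theta_j^{(n)}}\to 0$ to the critical points via a companion-matrix identity (Lemma~\ref{lemma:moments}). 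This two-step structure avoids any analysis near $S^1$.
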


\begin{remark}
If $M_n$ is an $n \times n$ random matrix Haar distributed on $\Or(n)$, $\SO(n)$, $\U(n)$, or $\Sp(n)$, then $\mu_{M_n}$ also converges in probability to the uniform distribution on the unit circle centered at the origin.  Moreover, in \cite{MM}, the authors prove that the convergence holds in the almost sure sense and give a rate of convergence.  
\end{remark}

Figure \ref{fig:orthogonal} depicts a numerical simulation of the zeros and critical points of the characteristic polynomial of a random orthogonal matrix chosen according to Haar measure.  

\begin{figure}[ht]
	\begin{center}
	\includegraphics[trim=3cm 7cm 3cm 7cm,clip=true,width=10cm, height=10cm]{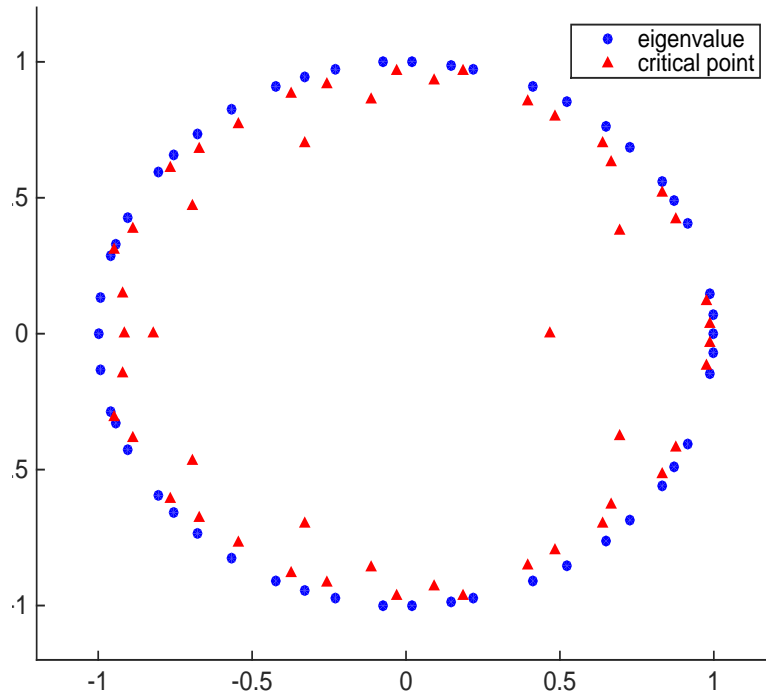}
   	\caption{The zeros and critical points of the characteristic polynomial of a random orthogonal matrix of size $50 \times 50$.}
	\label{fig:orthogonal}
	\end{center}
\end{figure}

\subsection{Random polynomials with roots on the unit circle}
More generally, we consider random polynomials of the form 
$$ p_n(z) := \prod_{j=1}^n (z - X_j), $$
where $X_1, X_2, \ldots$ are random variables on the unit circle, not necessarily independent or identically distributed.  Indeed, we will deduce Theorem \ref{thm:compact} from the following more general result.

\begin{theorem} \label{thm:main}
For each $n \geq 1$, let $\theta_1^{(n)},\ldots, \theta_n^{(n)}$ be random variables on $[0,2\pi)$.  Set 
$$ p_n(z) := \prod_{j=1}^n (z - e^{i \theta_j^{(n)}}). $$
Assume
\begin{enumerate}[(i)]
\item \label{cond:main:1} we have
$$ \lim_{\delta \searrow 0} \limsup_{n \to \infty} \Prob \left( \left| \sum_{j=1}^n e^{-i \theta_j^{(n)}} \right| \leq \delta \right) = 0, $$
\item \label{cond:main:2} for almost every $z \in \mathbb{D} := \{ w \in \mathbb{C} : |w| < 1 \}$,
$$ \lim_{\delta \searrow 0} \limsup_{n \to \infty} \Prob \left( \left| \sum_{m=0}^{\lfloor \log^2 n \rfloor} z^m \sum_{j=1}^n e^{-i \theta_j^{(n)} (m+1)} \right| \leq \delta \right) = 0, $$
\item \label{cond:main:3} for all integers $m \geq 1$,
$$ \frac{1}{n} \sum_{j=1}^n e^{i \theta_j^{(n)}m} \longrightarrow 0 $$
in probability as $n \to \infty$.  
\end{enumerate}
Then $\mu_{p_n}'$ converges in probability as $n \to \infty$ to the uniform probability distribution on the unit circle centered at the origin.  
\end{theorem}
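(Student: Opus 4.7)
The plan is to reduce the theorem to convergence of logarithmic potentials and then invoke a standard potential-theoretic principle. Let $\mu$ denote the uniform probability measure on the unit circle, whose logarithmic potential is $U_\mu(z) = \log^{+}|z|$. Since $p_n'$ has degree $n-1$ and leading coefficient $n$, one has
\[
U_{\mu_{p_n}'}(z) := \int \log|z-w|\,d\mu_{p_n}'(w) = \frac{1}{n-1}\log|p_n'(z)| - \frac{\log n}{n-1}.
\]
The goal is to prove $U_{\mu_{p_n}'}(z) \to U_\mu(z)$ in probability for Lebesgue-almost every $z \in \mathbb{C}$. Writing $p_n'(z) = n\,p_n(z)\,G_n(z)$, where $G_n(z) := \frac{1}{n}\sum_{j=1}^n (z - e^{i\theta_j^{(n)}})^{-1}$ is the Cauchy transform of $\mu_{p_n}$, the strategy is to treat $\frac{1}{n}\log|p_n(z)|$ and $\frac{1}{n-1}\log|G_n(z)|$ separately.

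The convergence $\frac{1}{n}\log|p_n(z)| \to \log^{+}|z|$ in probability for $|z| \neq 1$ follows readily from condition~(iii). Expanding $\log(1-w)$ in its power series gives, for $|z| < 1$,
\[
\frac{1}{n}\log|p_n(z)| = -\Re\sum_{m=1}^{\infty}\frac{z^m}{m}\cdot\frac{1}{n}\sum_{j=1}^{n} e^{-im\theta_j^{(n)}},
\]
with an analogous expansion (plus the explicit leading term $\log|z|$) for $|z| > 1$. Condition~(iii) sends each inner average to $0$ in probability, and the series converges by dominated convergence (each average has modulus at most $1$, and $\sum |z|^m/m < \infty$ for $|z|<1$). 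The same expansion applied to $G_n$ yields $G_n(z) \to 1/z$ in probability for $|z| > 1$, so $\log|G_n(z)|$ is $O_P(1)$ and its contribution vanishes after dividing by $n-1$.

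The heart of the argument, and the step I expect to be the main obstacle, is controlling $\frac{1}{n-1}\log|G_n(z)|$ on the open disk $\mathbb{D}$. For $|z| < 1$, an analogous expansion gives
\[
n\,G_n(z) = -\sum_{m=0}^{\infty} z^m \sum_{j=1}^n e^{-i\theta_j^{(n)}(m+1)},
\]
and truncation at $m = \lfloor \log^2 n\rfloor$ introduces an error of size at most $n|z|^{\log^2 n+1}/(1-|z|)$, which decays faster than any polynomial in $n$. Hence $n\,G_n(z)$ differs uniformly by $o(1)$ from the truncated sum $S_n(z)$ appearing in condition~(ii). That condition then supplies the crucial lower bound $|n\,G_n(z)| \geq \delta/2$ with probability tending to $1$ as $\delta \to 0$ uniformly in $n$, valid for Lebesgue-a.e.\ $z \in \mathbb{D}$. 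Combined with the trivial upper bound $|G_n(z)| \leq 1/(1-|z|)$, this yields $|\log|G_n(z)|| = O_P(\log n)$, so $\frac{1}{n-1}\log|G_n(z)| \to 0$ in probability. Since the null set excluded by condition~(ii) may contain the origin, I handle $z = 0$ separately via condition~(i): a direct computation using $p_n(z) = \prod_j (z - e^{i\theta_j^{(n)}})$ gives $|p_n'(0)| = |\sum_{j=1}^n e^{-i\theta_j^{(n)}}|$, so condition~(i) forces $\frac{1}{n-1}\log|p_n'(0)| \to 0$ in probability and hence the convergence of $U_{\mu_{p_n}'}$ at the origin.

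To pass from a.e.\ convergence of logarithmic potentials in probability to weak convergence of measures in probability, I would use a standard subsequence argument. Every subsequence admits (by Fubini and diagonal extraction over a countable dense set of points) a further subsequence along which $U_{\mu_{p_n}'}(z) \to U_\mu(z)$ almost surely for Lebesgue-a.e.\ $z \in \mathbb{C}$. By the Gauss--Lucas theorem (Theorem~\ref{thm:gauss}), the measures $\mu_{p_n}'$ are supported in $\overline{\mathbb{D}}$, and the associated potentials are uniformly bounded in $L^1_{\mathrm{loc}}$; dominated convergence therefore upgrades a.e.\ pointwise convergence to $L^1_{\mathrm{loc}}$ convergence, and the distributional identity $\Delta U_\nu = 2\pi\nu$ transfers this to weak convergence $\mu_{p_n}' \to \mu$ almost surely along the subsequence. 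Since the subsequence was arbitrary, $\mu_{p_n}' \to \mu$ in probability, as claimed.
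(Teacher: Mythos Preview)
Your approach is sound and leads to a correct proof, but it follows a different route from the paper's. The paper first proves a structural reduction (Lemma~\ref{lemma:radii}): once one knows the radii of the critical points cluster near $1$, the angular equidistribution is inherited from that of the roots via a companion-matrix moment identity (Lemma~\ref{lemma:moments}). The radial clustering is then obtained by showing $\frac{1}{n}\int_{\mathbb{D}_{1-\eps}}(\log|L_n|)\,\Delta\varphi\,d\lambda\to 0$ for test functions $\varphi$ supported strictly inside $\mathbb{D}$, invoking Lemma~\ref{lemma:TVtight} with pointwise convergence from condition~\eqref{cond:main:2} and $L^2$-tightness from a Poisson--Jensen argument that consumes condition~\eqref{cond:main:1}. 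You instead work directly with the full logarithmic potential $U_{\mu_{p_n}'}$ on all of $\mathbb{C}$, treating $|z|>1$ via condition~\eqref{cond:main:3}; your uniform integrability comes for free from the deterministic bound $\|U_\nu\|_{L^2(K)}\le \sup_{|w|\le 1}\|\log|\cdot-w|\|_{L^2(K)}$, valid for any probability measure $\nu$ on $\overline{\mathbb{D}}$. This is arguably simpler and, as a byproduct, shows condition~\eqref{cond:main:1} is redundant: your invocation of it at the single point $z=0$ is unnecessary for an a.e.\ statement. Two small repairs are needed, however. First, ``uniformly bounded in $L^1_{\mathrm{loc}}$ plus dominated convergence'' does not yield $L^1_{\mathrm{loc}}$ convergence; you need the $L^2_{\mathrm{loc}}$ bound just mentioned together with Vitali's theorem (uniform integrability). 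Second, ``diagonal extraction over a countable dense set of points'' cannot produce an a.e.\ statement, since logarithmic potentials are not continuous; the correct mechanism is the Fubini argument underlying Lemma~\ref{lemma:TVtight} (apply it to $f_n=U_{\mu_{p_n}'}-U_\mu$ against the measure $|\Delta\varphi|\,d\lambda$). With those fixes your argument is complete.
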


We pause for a moment to discuss the three assumptions of Theorem \ref{thm:main}.  Roughly speaking, condition \eqref{cond:main:3} is the most important, while conditions \eqref{cond:main:1} and \eqref{cond:main:2} are technical anti-concentration estimates.  Indeed, condition \eqref{cond:main:3} implies that the empirical measure constructed from $e^{i\theta_1^{(n)}}, \ldots, e^{i \theta_n^{(n)}}$ converges in probability as $n \to \infty$ to the uniform distribution on the unit circle centered at the origin.  We also note that the sequence $\log^2 n$ appearing in condition \eqref{cond:main:2} is not vital; it can be replaced with $(\log n)^{1 + \eps}$ for any $\eps > 0$.  

We will also verify the following alternative formulation of Theorem \ref{thm:main}.

\begin{theorem}[Alternative formulation] \label{thm:alternative}
For each $n \geq 1$, let $\theta_1^{(n)},\ldots, \theta_n^{(n)}$ be random variables on $[0,2\pi)$.  Set 
$$ p_n(z) := \prod_{j=1}^n (z - e^{i \theta_j^{(n)}}). $$
Assume
\begin{enumerate}[(i)]
\item \label{cond:alt:1} we have
\begin{equation*} 
	\lim_{\delta \searrow 0} \limsup_{n \to \infty} \Prob \left( \left| \sum_{j=1}^n e^{-i \theta_j^{(n)}} \right| \leq \delta \right) = 0, 
\end{equation*}
\item \label{cond:alt:2} for almost every $z \in \mathbb{D} := \{ w \in \mathbb{C} : |w| < 1 \}$,
\begin{equation*} 
	\frac{1}{n} \log \left| \sum_{j=1}^n \frac{1}{z - e^{i \theta_j^{(n)}}} \right| \longrightarrow 0 
\end{equation*}
in probability as $n \to \infty$,
\item \label{cond:alt:3} for all integers $m \geq 1$,
\begin{equation*} 
	\frac{1}{n} \sum_{j=1}^n e^{i \theta_j^{(n)}m} \longrightarrow 0 
\end{equation*}
in probability as $n \to \infty$.  
\end{enumerate}
Then $\mu_{p_n}'$ converges in probability as $n \to \infty$ to the uniform probability distribution on the unit circle centered at the origin.  
\end{theorem}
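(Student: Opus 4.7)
The plan is to apply the logarithmic potential method. Let $U_n(z) := \int_{\mathbb{C}} \log|z-w|\, d\mu_{p_n}'(w)$ be the logarithmic potential of the empirical measure of the critical points of $p_n$, and let $U(z) := \max(\log|z|, 0)$ be the logarithmic potential of the uniform probability measure on the unit circle. Writing $q_n(z) := p_n'(z)/n$, which is a monic polynomial of degree $n-1$, one has $U_n(z) = \frac{1}{n-1}\log|q_n(z)|$. The goal is to show that $U_n \to U$ in $L^1_{\mathrm{loc}}(\mathbb{C})$ in probability, which via the distributional Laplacian identity $\mu_{p_n}' = \frac{1}{2\pi}\Delta U_n$ will translate into the desired weak convergence of measures.

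The key decomposition comes from factoring $p_n'(z) = p_n(z)\,S_n(z)$ with $S_n(z) := \sum_{j=1}^n (z - e^{i\theta_j^{(n)}})^{-1}$, giving
$$U_n(z) = \frac{n}{n-1} \cdot \frac{1}{n}\log|p_n(z)| + \frac{1}{n-1}\log|S_n(z)| - \frac{\log n}{n-1}.$$
Condition \eqref{cond:alt:3} together with Weierstrass approximation implies that $\nu_n := \frac{1}{n}\sum_j \delta_{e^{i\theta_j^{(n)}}}$ converges weakly in probability to the uniform measure on the unit circle; since $w \mapsto \log|z-w|$ is bounded and continuous on the unit circle whenever $|z| \ne 1$, this yields $\frac{1}{n}\log|p_n(z)| \to U(z)$ in probability for every $z$ off the unit circle. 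For the middle term, the regime $|z|<1$ is supplied directly by hypothesis \eqref{cond:alt:2}, while for $|z|>1$ I would expand
$$\frac{1}{n} S_n(z) = \frac{1}{z} + \sum_{m=1}^\infty z^{-m-1} \cdot \frac{1}{n}\sum_{j=1}^n e^{i\theta_j^{(n)} m}$$
and invoke hypothesis \eqref{cond:alt:3} term-by-term, dominated by the geometric series $\sum_m |z|^{-m-1}$, to conclude $S_n(z)/n \to 1/z$ in probability and hence $\frac{1}{n}\log|S_n(z)| \to 0$. Combining the two terms, $U_n(z) \to U(z)$ in probability for almost every $z \in \mathbb{C}$.

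To upgrade pointwise-a.e.\ convergence to weak convergence of $\mu_{p_n}'$, I appeal to the Gauss--Lucas theorem (Theorem \ref{thm:gauss}), which confines every critical point of $p_n$ to the closed unit disk, so $\{\mu_{p_n}'\}$ is automatically tight. A Fubini estimate then gives a uniform $L^p_{\mathrm{loc}}$ bound on $U_n$ for every $p>1$, using that $\int_{|z|\le R} \lvert \log|z-w| \rvert^p\, dA(z)$ is bounded uniformly in $w \in \overline{\mathbb{D}}$. Together with the pointwise-in-probability convergence established above, Vitali's convergence theorem (applied along almost-sure subsequences) yields $L^1_{\mathrm{loc}}$ convergence of $U_n$ to $U$ in probability; testing against $\Delta f$ for $f \in C_c^\infty(\mathbb{C})$ and invoking tightness then completes the proof. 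The main obstacle I expect is locating precisely where each hypothesis must be invoked: hypotheses \eqref{cond:alt:2} and \eqref{cond:alt:3} between them supply the pointwise convergence of the log potentials, while the anti-concentration hypothesis \eqref{cond:alt:1} is what rules out degenerate arrangements such as the $n$-th roots of unity — where $p_n(z) = z^n - 1$ and every critical point collapses to the origin, while $\sum_j e^{-i\theta_j^{(n)}} = 0$ — and is therefore the ingredient that ensures $\mu_{p_n}'$ places asymptotically no mass at the origin.
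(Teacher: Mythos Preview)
Your argument is correct and in one respect cleaner than the paper's. Both proofs rest on the same logarithmic-potential identity and the factorization $p_n'=p_n\cdot S_n$ (the paper writes $L_n$ for your $S_n$), and both feed pointwise-in-probability convergence plus an $L^{1+\delta}$ tightness bound into the Tao--Vu lemma (Lemma~\ref{lemma:TVtight}). The differences lie in scope and in the tightness step. The paper works only inside $\mathbb{D}_{1-\eps}$ with the function $\frac{1}{n}\log|L_n|$; having shown that this vanishes in probability, it still needs the companion-matrix moment comparison (Lemmas~\ref{lemma:radii}--\ref{lemma:companion}) to convert ``few critical points deep inside the disk'' into the full weak limit, and its tightness argument (Lemma~\ref{lemma:tight}) runs through the Poisson--Jensen formula and the lower bound $I_n(0)\ge\log|L_n(0)|=\log\bigl|\sum_j e^{-i\theta_j^{(n)}}\bigr|$, which is precisely where hypothesis~\eqref{cond:alt:1} enters. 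You instead work with $U_n=\frac{1}{n-1}\log|q_n|$ on all of $\mathbb{C}$, handling $|z|>1$ via the geometric expansion and condition~\eqref{cond:alt:3}, and your tightness is the one-line deterministic bound $\int_{B_R}|U_n|^p\,dA\le\sup_{|w|\le1}\int_{B_R}\bigl|\log|z-w|\bigr|^p\,dA<\infty$ coming from Gauss--Lucas and Jensen's inequality. This bypasses Lemmas~\ref{lemma:radii}--\ref{lemma:companion} and Lemma~\ref{lemma:tight} alike, and it also resolves the uncertainty in your final paragraph: in your argument hypothesis~\eqref{cond:alt:1} is never actually invoked. The roots-of-unity example already violates~\eqref{cond:alt:2} (one computes $\frac{1}{n}\log|S_n(z)|\to\log|z|\neq 0$ there), so it is~\eqref{cond:alt:2}, not~\eqref{cond:alt:1}, that excludes it in your framework; your route in fact shows that~\eqref{cond:alt:1} is redundant in Theorem~\ref{thm:alternative}.
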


We will use Theorem \ref{thm:main} to prove Theorem \ref{thm:compact}.  However, Theorem \ref{thm:alternative} is also useful.  For example, we can recover Corollary \ref{cor:PMK} from Theorem \ref{thm:alternative}.  Indeed, if $\theta_1^{(n)}, \ldots, \theta_n^{(n)}$ are iid random variables uniformly distributed on $[0,2 \pi)$, then the assumptions of Theorem \ref{thm:alternative} can be verified using \cite[Theorem 2.22]{P}, \cite[Lemma 2.1]{K}, and the law of large numbers.  Theorem \ref{thm:alternative} is also useful when the random variables $\theta_1^{(n)}, \ldots, \theta_n^{(n)}$ are dependent.  To illustrate this point, we will use Theorem \ref{thm:alternative} to verify the following corollary.

\begin{corollary} \label{cor:dependent}
Let $\theta_1, \theta_2, \ldots$ be a sequence of iid random variables distributed uniformly on $[0,2 \pi)$.  For each $n \geq 1$, set
$$ p_{2n}(z) := \prod_{j = 1}^{n} (z - e^{i \theta_j})(z - e^{- i \theta_j}). $$
Then $\mu_{p_{2n}}'$ converges in probability as $n \to \infty$ to the uniform probability distribution on the unit circle centered at the origin. 
\end{corollary}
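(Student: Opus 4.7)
The plan is to verify the three hypotheses of Theorem \ref{thm:alternative} for the array of phases $\theta_j^{(2n)}$ obtained by pairing $\theta_1,\ldots,\theta_n$ with their negatives $-\theta_1,\ldots,-\theta_n$ (reduced modulo $2\pi$). Conditions \eqref{cond:alt:1} and \eqref{cond:alt:3} follow from classical limit theorems applied to the iid sequence $(\theta_j)_{j\geq 1}$. For \eqref{cond:alt:3}, for each integer $m \geq 1$,
\[
\frac{1}{2n}\sum_{j=1}^{2n} e^{im\theta_j^{(2n)}} = \frac{1}{n}\sum_{j=1}^n \cos(m\theta_j) \longrightarrow \E[\cos(m\theta_1)] = 0
\]
almost surely by the strong law of large numbers. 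For \eqref{cond:alt:1}, $\sum_{j=1}^{2n} e^{-i\theta_j^{(2n)}} = 2\sum_{j=1}^n \cos\theta_j$ is a sum of iid mean-zero random variables of positive variance, so the central limit theorem forces $|2\sum_j \cos\theta_j|$ to be of order $\sqrt{n}$, and the inner probability in \eqref{cond:alt:1} vanishes already for every fixed $\delta > 0$.

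The heart of the argument is condition \eqref{cond:alt:2}. Fix $z \in \mathbb{D}$, set
\[
f_z(\theta) := \frac{1}{z - e^{i\theta}} + \frac{1}{z - e^{-i\theta}}, \qquad S_n(z) := \sum_{j=1}^n f_z(\theta_j),
\]
so that the sum appearing in \eqref{cond:alt:2} equals $S_n(z)$. Using the expansion $(z - e^{\pm i\theta})^{-1} = -\sum_{k \geq 0} z^k e^{\mp i(k+1)\theta}$ (valid since $|z|<1$) together with Parseval's identity, the nonzero Fourier coefficients of $f_z$ are $-z^{|m|-1}$ at frequency $m \neq 0$; in particular
\[
\E f_z(\theta_1) = 0, \qquad \E |f_z(\theta_1)|^2 = \frac{2}{1-|z|^2}, \qquad \E f_z(\theta_1)^2 = \frac{2}{1-z^2}.
\]
The upper tail of $\frac{1}{2n}\log|S_n(z)|$ is then immediate from Chebyshev's inequality: $\Prob(|S_n(z)| > n) \leq \E|S_n(z)|^2 / n^2 = O(1/n)$.

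For the lower tail, the classical central limit theorem applied to the iid sequence $(f_z(\theta_j))_{j \geq 1}$ gives $S_n(z)/\sqrt{n} \Rightarrow G_z$, where $G_z$ is Gaussian on $\mathbb{C}$ with moments as above. A short computation from these moments shows that the covariance matrix of $(\Re f_z(\theta_1), \Im f_z(\theta_1))$ has determinant
\[
\frac{1}{(1-|z|^2)^2} - \frac{1}{|1-z^2|^2} = \frac{4(\Im z)^2}{(1-|z|^2)^2 |1-z^2|^2},
\]
which is strictly positive whenever $\Im z \neq 0$; for real $z \in (-1,1)$, $f_z$ is real-valued and $G_z$ is instead a nondegenerate real Gaussian. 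In either case $G_z$ has no atom at the origin, and hence for any $\eps > 0$ and any $\eta > 0$,
\[
\Prob\bigl(|S_n(z)| < e^{-\eps n}\bigr) \leq \Prob\bigl(|S_n(z)|/\sqrt{n} < \eta\bigr) \longrightarrow \Prob(|G_z| < \eta)
\]
as $n \to \infty$, and the right-hand side tends to $0$ as $\eta \searrow 0$. This establishes \eqref{cond:alt:2} for every $z \in \mathbb{D}$. The main obstacle is exactly this lower-tail estimate, which requires both the central limit theorem and the non-degeneracy check on the limiting Gaussian's covariance matrix; everything else reduces to standard laws of large numbers and the classical CLT for $\cos(m\theta_j)$.
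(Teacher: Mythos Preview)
Your proof is correct and follows the same overall strategy as the paper: verify the three hypotheses of Theorem~\ref{thm:alternative} for the symmetrized array of phases, with condition~\eqref{cond:alt:3} handled by the law of large numbers in both cases. The only substantive difference lies in the anti-concentration step for conditions~\eqref{cond:alt:1} and~\eqref{cond:alt:2}. The paper passes to the real part of the relevant sum and invokes Petrov's concentration-function bound (packaged as Lemma~\ref{lemma:dependentsum}), obtaining a quantitative $O(n^{-1/2})$ estimate but leaving the required non-degeneracy of $f(\theta_1)+f(2\pi-\theta_1)$ implicit. You instead apply the central limit theorem directly to the complex-valued sum $S_n(z)$ and verify non-degeneracy of the limiting Gaussian $G_z$ through an explicit covariance-determinant computation, treating the real-$z$ case separately. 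Your route is slightly more self-contained (no external concentration inequality) and makes the non-degeneracy check fully explicit; the paper's route gives a rate and is marginally shorter once Lemma~\ref{lemma:dependentsum} is in hand.
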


\subsection{Discussion and open problems}
We conjecture that for many classes of random polynomials the critical points should be stochastically close to the distribution of the roots.  Intuitively, this would imply that the distribution of the critical points is nearly identical to the distribution of the roots for a ``typical'' polynomial of high degree.

As another example, consider the Kac polynomials.  In this case, one can show even more by applying the results of Kabluchko and Zaporozhets \cite{KZ}.  

\begin{theorem}[Kabluchko--Zaporozhets]
Let $\xi_0, \xi_1, \ldots$ be a sequence of non-degenerate iid random variables such that $\E \log(1 + |\xi_0|) < \infty$.  For each $n \geq 1$, let $f_n(z) = \sum_{j=0}^n \xi_j z^j$.  Fix an integer $k \geq 1$.  Then $\mu_{f_n}$ and $\mu_{f_n}^{(k)}$ both converge in probability as $n \to \infty$ to the uniform probability distribution on the unit circle centered at the origin.  
\end{theorem}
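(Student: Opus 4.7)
The plan is to invoke the logarithmic potential method. Let $U(z) := \log^+|z|$ denote the logarithmic potential of the uniform probability measure on the unit circle. A standard replacement principle for polynomial zero distributions says: if $q_n$ is a random polynomial of degree $d_n \to \infty$, and if $\frac{1}{d_n}\log|q_n(z)| \to U(z)$ in probability for almost every $z \in \mathbb{C}$ together with a mild uniform integrability / anti-concentration input, then $\mu_{q_n}$ converges in probability to the uniform measure on the unit circle. The task therefore reduces to verifying this pointwise log-potential convergence for both $f_n$ and $f_n^{(k)}$.

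For the pointwise convergence of $\frac{1}{n}\log|f_n(z)|$, I would split on $|z|<1$ and $|z|>1$. The hypothesis $\E\log(1+|\xi_0|)<\infty$ is equivalent (via Borel--Cantelli) to $\frac{1}{j}\log(1+|\xi_j|)\to 0$ almost surely, so for $|z|<1$ the power series $f_\infty(z):=\sum_{j\ge 0}\xi_j z^j$ converges almost surely to a nontrivial analytic function (nontriviality coming from non-degeneracy), giving $\frac{1}{n}\log|f_n(z)|\to 0 = U(z)$ for a.e.\ $z\in\mathbb{D}$. For $|z|>1$, I would use the reciprocal polynomial: with $w=1/z$,
\begin{equation*}
z^{-n}f_n(z) \;=\; \sum_{m=0}^{n}\xi_{n-m}\,w^m,
\end{equation*}
which has the same distribution as $\sum_{m=0}^{n}\xi_m w^m$, converging almost surely to $f_\infty(w)$. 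Hence $\frac{1}{n}\log|f_n(z)|-\log|z|\to 0$ in probability for almost every $|z|>1$.

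The derivative case is structurally identical. Writing
\begin{equation*}
f_n^{(k)}(z) \;=\; \sum_{j=0}^{n-k} \frac{(j+k)!}{j!}\,\xi_{j+k}\,z^j,
\end{equation*}
one observes that the weights $(j+k)!/j!$ are polynomial in $j$ of degree $k$ and thus do not affect either the radius of convergence of the associated series or the $\frac{1}{n}\log$-scale growth of the leading coefficient. For $|z|<1$ the analogous weighted series converges almost surely to a nontrivial analytic function, so $\frac{1}{n}\log|f_n^{(k)}(z)|\to 0$. For $|z|>1$, the same reciprocal-polynomial / distributional-equivalence trick, combined with $\frac{1}{n}\log\bigl(n!/(n-k)!\bigr)=\frac{k\log n}{n}+o(1)\to 0$, yields $\frac{1}{n}\log|f_n^{(k)}(z)|\to\log|z|$. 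So both polynomials have the same limiting log-potential $U$, and the replacement principle then gives the conclusion for both $\mu_{f_n}$ and $\mu_{f_n}^{(k)}$.

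The hard part, as usual in this framework, is supplying the anti-concentration input that the replacement principle demands near $|z|=1$, namely ruling out that $\frac{1}{n}\log|f_n^{(k)}(z)|$ is very negative on a set of positive Lebesgue measure (which is what would let root mass escape off the unit circle). For $f_n$ this is a small-ball estimate on random power series that uses only the log-moment hypothesis and non-degeneracy of $\xi_0$; for $f_n^{(k)}$ the same estimate applies once $\xi_{j+k}$ is replaced by the weighted variable $\frac{(j+k)!}{j!}\xi_{j+k}$, which is still non-degenerate with a log-moment of the same order. I expect this anti-concentration / uniform-integrability step, and its uniformity in $z$, to be the main technical obstacle.
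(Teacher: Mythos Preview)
Your sketch is essentially the argument underlying Kabluchko--Zaporozhets \cite{KZ}, so it is correct in spirit, but the paper does not redo this work: its proof is a one-line appeal to \cite[Theorem~2.2]{KZ}, which already handles random polynomials $\sum_{j=0}^n a_j\xi_j z^j$ with deterministic weights $a_j$ satisfying a mild growth condition. For $f_n$ one takes $a_j\equiv 1$; for $f_n^{(k)}$ one takes $a_j=(j+k)!/j!$ (after an index shift) and simply checks that $\frac{1}{j}\log a_j\to 0$, which is the ``estimating the coefficients'' step the paper mentions. So the paper outsources the entire log-potential / anti-concentration machinery to \cite{KZ}, whereas you are reconstructing it.

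One point in your reconstruction deserves care. In the derivative case you invoke ``the same reciprocal-polynomial / distributional-equivalence trick,'' but the coefficients $\eta_j:=\tfrac{(j+k)!}{j!}\xi_{j+k}$ are \emph{not} identically distributed, so reversing the index does not yield a sequence with the same law. The correct version is to write, for $|w|<1$,
\[
\frac{(n-k)!}{n!}\,z^{-(n-k)}f_n^{(k)}(z)\;=\;\sum_{m=0}^{n-k} c_{n,m}\,\xi_{n-m}\,w^m,\qquad c_{n,m}:=\frac{(n-m)!\,(n-k)!}{(n-k-m)!\,n!},
\]
observe that $(\xi_{n-m})_{m\ge 0}\stackrel{d}{=}(\xi_m)_{m\ge 0}$, and then argue that since $0\le c_{n,m}\le 1$ and $c_{n,m}\to 1$ for each fixed $m$, the right-hand side converges (in distribution, for a.e.\ $w$) to the nontrivial limit $f_\infty(w)$. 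This recovers $\frac{1}{n}\log|f_n^{(k)}(z)|\to\log|z|$ after absorbing $\frac{1}{n}\log\frac{n!}{(n-k)!}\to 0$. Your identification of the anti-concentration step as the main obstacle is accurate; this is precisely the content packaged into \cite[Theorem~2.2]{KZ}, and the reason the paper cites rather than reproves it.
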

\begin{proof}
Both claims follow from \cite[Theorem 2.2]{KZ} by simply estimating the coefficients of $f_n^{(k)}$.  In fact, a similar argument allows one to consider solutions of the equation $f_n^{(k)} = c_n$, where $c_n$ is a constant; see \cite[Remark 2.11]{KZ} for details.  
\end{proof}

We conjecture that this universality phenomenon should also hold for the characteristic polynomial of many random matrix ensembles.  For instance, Figure \ref{fig:ginibre} depicts a numerical simulation of the zeros and critical points of the characteristic polynomial of a random matrix with iid real standard normal entries.  

\begin{figure}[ht]
	\begin{center}
	\includegraphics[trim=3cm 7cm 2cm 7cm,clip=true,width=6cm,height=6cm]{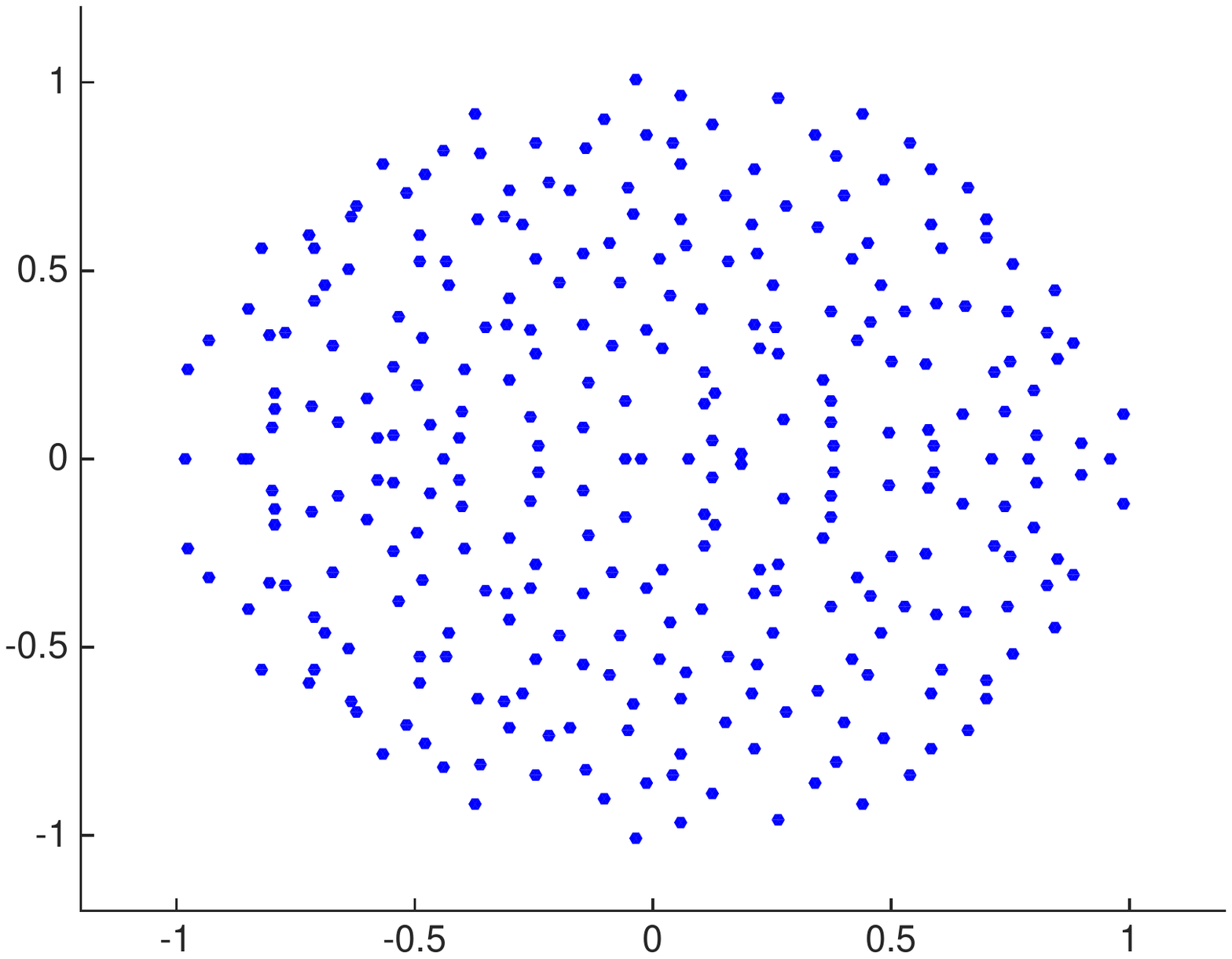}
	\includegraphics[trim=2cm 7cm 3cm 7cm,clip=true,width=6cm,height=6cm]{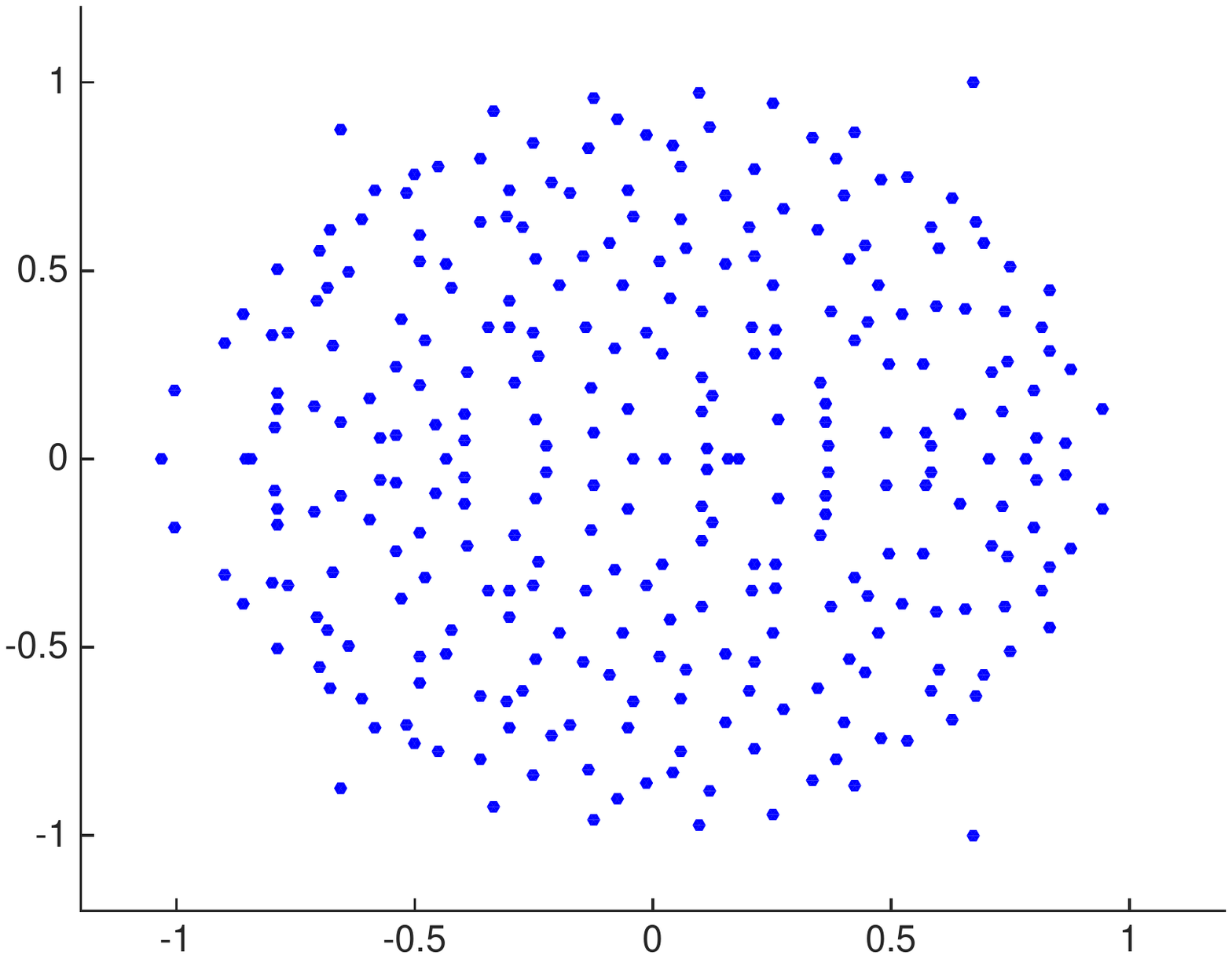}
   	\caption{The roots and critical points of the characteristic polynomial of an $n \times n$ random matrix with iid real standard normal entries when $n=300$.  The figure on the left depicts the location of the eigenvalues (scaled by $1/\sqrt{n}$).  The figure on the right shows the location of the critical points (scaled by $1/\sqrt{n}$).}
	\label{fig:ginibre}
	\end{center}
\end{figure}

\subsection{Organization}

The paper is organized as follows.  In Section \ref{sec:compact}, we prove Theorem \ref{thm:compact} and Corollary \ref{cor:dependent} using Theorems \ref{thm:main} and \ref{thm:alternative}.  The proof of Theorems \ref{thm:main} and \ref{thm:alternative} is contained in Sections \ref{sec:proof} and \ref{sec:lemmas}.

\subsection{Notation}

We let $\mathbb{D}_r := \{ z \in \mathbb{C} : |z| < r\}$ be the open disk of radius $r>0$ centered at the origin and $\overline{\mathbb{D}}_r := \{z \in \mathbb{C} : |z| \leq r \}$ its closure.  We write $\mathbb{D} := \mathbb{D}_1$.  

We let $C$ and $K$ denote constants that are non-random and may take on different values from one appearance to the next.  The notation $K_p$ means that the constant $K$ depends on another parameter $p$.  

We write a.s., a.a., and a.e. for almost surely, Lebesgue almost all, and Lebesgue almost everywhere respectively.  For an event $E$, we let $\oindicator{E}$ denote the indicator function of $E$; $E^C$ is the complement of $E$.

\section{Proof of Theorem \ref{thm:compact} and Corollary \ref{cor:dependent}} \label{sec:compact}

In this section, we prove Theorem \ref{thm:compact} and Corollary \ref{cor:dependent} using Theorems \ref{thm:main} and \ref{thm:alternative}.  

\subsection{Proof of Theorem \ref{thm:compact}}

We will apply Theorem \ref{thm:main} to prove Theorem \ref{thm:compact}.  For each $n \geq 1$, let $M_n$ be an $n \times n$ matrix Haar distributed on $\Or(n)$, $\SO(n)$, $\U(n)$, or $\Sp(n)$.  Let $e^{i \theta_1^{(n)}}, \ldots, e^{i \theta_n^{(n)}}$ be the eigenvalues of $M_n$, where $\theta_1^{(n)}, \ldots, \theta_n^{(n)} \in [0, 2\pi)$.  It now suffices to show that the eigenvalues of $M_n$ satisfy the three assumptions of Theorem \ref{thm:main}.  

In order to verify the assumptions of Theorem \ref{thm:main}, we will need the following multivariate central limit theorem for traces of random matrices from the classical matrix groups found in \cite{DSclt,Sclt}.  First, we recall the Wasserstein distance between two probability distributions.  

\begin{definition}[Wasserstein distance]
Let $(S,d)$ be a separable metric space, and let $\mu$ and $\nu$ be two probability measures on $S$.  By $M(\mu,\nu)$ we denote the set of all probability measures on $S \times S$ with marginals $\mu$ and $\nu$.  The \emph{Wasserstein distance} $d_{\mathcal{W}}(\mu,\nu)$ between $\mu$ and $\nu$ is defined by
$$ d_{\mathcal{W}}(\mu,\nu) := \inf \left\{ \int d(x,y) d \pi(x,y) : \pi \in M(\mu,\nu) \right\}. $$
We write $d_{\mathcal{W}}(P,Q)$, where $P$ and $Q$ are two random variables taking values in $S$, to mean the Wasserstein distance between the distributions of $P$ and $Q$.  
\end{definition}

The Kantorovich--Rubinstein theorem gives an equivalent formulation of the Wasserstein distance in terms of Lipschitz functions on the separable metric space $(S,d)$.  We refer the reader to \cite[Section 11.8]{D} for further details.  We now state the results from \cite{DSclt,Sclt}; the case where $M_n$ is drawn according to Haar measure from $\U(n)$, $\SO(n)$, or $\Sp(n)$ is handled in \cite[Theorem 1.1]{DSclt}, while the orthogonal group $\Or(n)$ is studied in \cite[Theorem 5.1]{Sclt}.  

\begin{theorem}[D\"{o}bler--Stolz] \label{thm:CLT}
Let $M_n$ be distributed according to Haar measure on $\Or(n)$, $\SO(n)$, $\U(n)$, or $\Sp(n)$.  For integers $d \geq 1$ and $r = 1, \ldots, d$, consider the $r$-dimensional (complex or real) random vector 
$$ W_{d,r,n} := (f_{d-r+1}(M_n), f_{d - r + 2}(M_n), \ldots, f_d(M_n))^{\mathrm{T}}, $$
where $f_j(M_n) := \tr (M_n^j)$ in the unitary case, 
$$ f_j(M_n) := \left\{
	\begin{array}{lr}
		\tr (M_n^j),& \text{ if } j \text{ is odd}, \\
		\tr (M_n^j) - 1,& \text{ if } j \text{ is even}
	\end{array} \right. $$
in the orthogonal and special orthogonal cases, and
$$ f_j(M_n) := \left\{
	\begin{array}{lr}
		\tr (M_n^j),& \text{ if } j \text{ is odd}, \\
		\tr (M_n^j) + 1,& \text{ if } j \text{ is even}
	\end{array} \right. $$ 
in the symplectic case.  In the orthogonal, special orthogonal, and symplectic cases, let $Z_{r,d} := (Z_{d-r+1}, \ldots, Z_d)^{\mathrm{T}}$ denote an $r$-dimensional real standard normal random vector.  In the unitary case, $Z$ is defined as a standard complex normal random vector.  In all cases take $\Sigma$ to be the diagonal matrix $\diag(d-r+1,d-r+2, \ldots,d)$, and write $Z_{\Sigma,r,d} := \Sigma^{1/2} Z_{r,d}$.  Then there exists an absolute constant $C > 0$ (independent of $r$, $d$, and $n$) such that, for any $n \geq 4d+1$, we have
$$ d_{\mathcal{W}}(W_{n,r,d}, Z_{\Sigma,r,d}) \leq C \frac{ \max \left\{ \frac{r^{7/2}}{(d-r+1)^{3/2}}, (d-r)^{3/2} \sqrt{r} \right\}}{n}. $$
\end{theorem}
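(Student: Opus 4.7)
The plan is to prove this multivariate quantitative central limit theorem via Stein's method for normal approximation, using an exchangeable pair construction natural to the Lie group structure. The starting point is Kantorovich--Rubinstein duality, which reduces bounding $d_{\mathcal{W}}(W_{n,r,d}, Z_{\Sigma,r,d})$ to controlling $|\E h(W_{n,r,d}) - \E h(Z_{\Sigma,r,d})|$ uniformly over $1$-Lipschitz $h$. Applying multivariate Stein theory, this becomes a bound on $|\E \mathcal{L}f(W_{n,r,d})|$, where $f$ solves the Stein equation $\mathcal{L}f = h - \E h(Z_{\Sigma,r,d})$ for the generator $\mathcal{L}$ of the Ornstein--Uhlenbeck process with stationary distribution $Z_{\Sigma,r,d}$. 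Standard regularity estimates for this Poisson equation provide control on the first three derivatives of $f$ in terms of the Lipschitz norm of $h$, with factors that depend polynomially on $r$ and on $(d-r+1)^{-1/2}$, the reciprocal square-root of the smallest eigenvalue of $\Sigma$.

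For the exchangeable pair, I would use the Brownian motion $(K_t)_{t \geq 0}$ on $G \in \{\U(n), \Or(n), \SO(n), \Sp(n)\}$ started at $I_n$, setting $M_n' := K_t M_n$ for a small parameter $t > 0$; the left-invariance of Haar measure makes $(M_n, M_n')$ exchangeable. The central computation, carried out via Schur--Weyl duality and explicit Casimir calculations on $G$, is that the Laplace--Beltrami operator $\Delta_G$ (the generator of $K_t$) acts on power-sum traces $p_j(M) := \tr(M^j)$ by $\Delta_G p_j = -j\, p_j$ modulo $O(1/n)$ corrections, with the specific corrections for the orthogonal and symplectic groups accounting precisely for the $\mp 1$ shifts appearing in the definition of $f_j$ for even $j$. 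Simultaneously, the carré-du-champ $\Gamma(p_j, p_k)$ reproduces the diagonal covariance structure of $\Sigma$, again exactly to leading order. Passing $t \downarrow 0$ via Itô's formula on the group converts these identities into the exact $\mathcal{L}$-drift condition required by Stein's method.

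The main obstacle is extracting the sharp quantitative rate. This requires Taylor expanding $f(W_{n,r,d}') - f(W_{n,r,d})$ to third order, using the above generator identities to cancel the leading and second-order contributions, and estimating the residual third-order remainder by moment bounds on triple products $\tr(M_n^{j_1})\tr(M_n^{j_2})\tr(M_n^{j_3})$ obtained from the Weingarten calculus. The two competing error sources produce the two terms in the stated maximum: the Stein-regularity factors multiplied by Haar trace moment estimates yield the $r^{7/2}(d-r+1)^{-3/2}$ contribution, while the size of the finite-$n$ discrepancy between Haar joint moments and their Gaussian limits yields the $(d-r)^{3/2}\sqrt{r}$ contribution, each normalized by the global $1/n$ prefactor coming from the $\Delta_G = O(1/n)$ scaling of the generator. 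For the orthogonal group, an additional case analysis separating odd and even powers of $j$ is required because of the representation-theoretic presence of invariants in certain tensor powers of the defining representation, which is precisely what forces the mean-correcting shift in the definition of $f_j$ for even $j$.
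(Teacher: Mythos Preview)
The paper does not prove this statement; Theorem~\ref{thm:CLT} is quoted directly from the literature, with the cases $\U(n)$, $\SO(n)$, $\Sp(n)$ attributed to \cite[Theorem~1.1]{DSclt} and the case $\Or(n)$ to \cite[Theorem~5.1]{Sclt}. It is used as a black box to verify the hypotheses of Theorem~\ref{thm:main}, so there is no ``paper's own proof'' to compare against.

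That said, your outline is a fair high-level summary of the D\"obler--Stolz argument itself: they do proceed via multivariate Stein's method with an exchangeable pair built from the heat semigroup on the compact group, the generator computation on power-sum traces does produce the linear regression condition with the indicated diagonal covariance (and the $\pm 1$ shifts for even powers in the orthogonal/symplectic cases arise exactly where you locate them), and the rate emerges from bounding the remainder terms using moment estimates for traces. Your description of the two competing error contributions is plausible but somewhat speculative in its attribution of the two branches of the maximum; in the original papers the precise bookkeeping of how the $r$, $d$, and $n$ dependence enters is more involved, and the exchangeable pair is constructed and analyzed slightly differently (via an infinitesimal rotation in a random $2$-plane rather than a literal $t \downarrow 0$ It\^o expansion). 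If you intend to actually reconstruct the proof rather than cite it, you would need to carry out those moment and regularity estimates in detail; as written, your proposal is a reasonable roadmap but not yet a proof.
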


\begin{remark}
There is a large collection of literature concerning traces of random elements from the classical compact matrix groups.  We refer the reader to \cite{DE,DS,DSclt,Ful,HR,J,PV,Sosh,Ste,Sclt} and references therein.  
\end{remark}

We now verify the three assumptions of Theorem \ref{thm:main}.  We will use the same notation as in Theorem \ref{thm:CLT}.  Set $N := \lfloor \log^2 n \rfloor$.  By Theorem \ref{thm:CLT}, there exists random variables $\xi_1^{(n)}, \ldots, \xi_{N+1}^{(n)}$ such that, for $n$ sufficiently large, the random vector
$$ (\xi_1^{(n)}, \ldots, \xi_{N+1}^{(n)})^\mathrm{T} $$
has the same distribution as 
$$ (f_1(M_n), \ldots, f_{N+1}(M_n))^\mathrm{T}, $$
and
\begin{equation} \label{eq:CLT:exp}
	\E \sqrt{ \sum_{m=0}^N \left| \xi_{m+1}^{(n)} - \sqrt{m+1} Z_{m+1} \right|^2 } \leq C \frac{(N+1)^{7/2}}{n}, 
\end{equation}
where $Z := (Z_1, \ldots, Z_{N+1})^{\mathrm{T}}$ is a standard normal random vector.  Here $f_j(M_n)$ is defined as in Theorem \ref{thm:CLT}, $C > 0$ is an absolute constant, and $Z$ is a complex standard normal random vector in the unitary case and a real standard normal random vector in the other cases.  

For any positive integer $m$, we write
$$ \tr M_n^{m} = f_m(M_n) + \alpha_{n,m}, $$
where $\alpha_{n,m}$ is deterministic and can take the values $\pm 1$ or $0$ depending on whether $m$ is even or odd and depending on which classical matrix group $M_n$ is drawn from.   

We now verify condition \eqref{cond:main:3} of Theorem \ref{thm:main}.  Let $m$ be a positive integer.  For any $\eta > 0$ and all $n$ sufficiently large, by Markov's inequality, we have
\begin{align*}
	\Prob \left( \frac{1}{n} \left| \tr M_n^m \right| > \eta \right) &= \Prob \left( \left| f_m(M_n) + \alpha_{n,m} \right| > n \eta \right) \\
		&= \Prob \left( \left| \xi_m^{(n)} + \alpha_{n,m} \right| > n \eta \right) \\
		&\leq \Prob \left( \left| \xi_m^{(n)} \right| > \frac{n \eta}{2} \right) \\
		&\leq  2 \frac{ \E | \xi_m^{(n)} |}{n \eta} \\
		&\leq 2 \frac{\E | \xi_m^{(n)} - \sqrt{m} Z_m |}{n \eta} + 2 \sqrt{m} \frac{\E|Z_m|}{n \eta}.
\end{align*} 
Therefore, by \eqref{eq:CLT:exp}, we conclude that, for any $\eta > 0$, 
$$ \lim_{n \to \infty} \Prob \left( \frac{1}{n} \left| \tr M_n^m \right| > \eta \right) = 0, $$
which completes the verification of condition \eqref{cond:main:3}.  (Alternatively, condition \eqref{cond:main:3} also follows from the results in \cite{MM}.)  

It remains to verify conditions \eqref{cond:main:1} and \eqref{cond:main:2} of Theorem \ref{thm:main}.  Notice that condition \eqref{cond:main:1} follows from condition \eqref{cond:main:2} in the case that $z=0$.  Thus, it suffices to prove condition \eqref{cond:main:2} for all $z \in \mathbb{D}$.  

To this end, define the event
$$ E_n := \left\{ \sqrt{ \sum_{m=0}^N \left| \xi_{m+1}^{(n)} - \sqrt{m+1} Z_{m+1} \right|^2 } \leq \frac{1}{100 \log^2 n} \right\}. $$
By Markov's inequality and \eqref{eq:CLT:exp}, it follows that 
\begin{equation} \label{eq:EnC}
	\lim_{n \to \infty} \Prob \left(E_n^C \right) = 0. 
\end{equation}

It remains to show that, for all $z \in \mathbb{D}$, 
$$ \lim_{\delta \searrow 0} \limsup_{n \to \infty} \Prob \left( \left| \sum_{m=0}^N z^m \overline{\tr M_n^{m+1} } \right| \leq \delta \right) = 0. $$
By symmetry, it suffices to show that for all $z \in \mathbb{D}$, 
$$ \lim_{\delta \searrow 0} \limsup_{n \to \infty} \Prob \left( \left| \sum_{m=0}^N z^m \tr M_n^{m+1} \right| \leq \delta \right) = 0. $$

Fix $z \in \mathbb{D}$.  Observe that
\begin{align*}
	\Prob \left( \left| \sum_{m=0}^N z^m \tr M_n^{m+1} \right| \leq \delta \right) &= \Prob \left( \left| \sum_{m=0}^N z^m ( \xi_{m+1}^{(n)} + \alpha_{n,m+1} ) \right) \leq \delta \right) \\
		&\leq \Prob \left( \left\{ \left| \sum_{m=0}^N z^m ( \xi_{m+1}^{(n)} + \alpha_{n,m+1} ) \right) \leq \delta \right\} \bigcap E_n \right) + \Prob(E_n^C). 
\end{align*}
Thus, by \eqref{eq:EnC}, it suffices to show that
\begin{equation} \label{eq:suffice:pw}
	\lim_{\delta \searrow 0} \limsup_{n \to \infty} \Prob \left( \left\{ \left| \sum_{m=0}^N z^m ( \xi_{m+1}^{(n)} + \alpha_{n,m+1} ) \right) \leq \delta \right\} \bigcap E_n \right) = 0. 
\end{equation} 

Notice that on the event $E_n$, we have
\begin{align*}
	&\left| \sum_{m=0}^N z^m (\xi_{m+1}^{(n)} + \alpha_{n,m+1}) - \sum_{m=0}^N z^m ( \sqrt{m+1} Z_{m+1} + \alpha_{n,m+1} ) \right| \\
	&\qquad\qquad \leq \sum_{m=0}^N \left| \xi_{m+1}^{(n)} - \sqrt{m+1} Z_{m+1} \right| \\
	&\qquad\qquad \leq \sqrt{N+1} \sqrt{ \sum_{m=0}^N \left| \xi_{m+1}^{(n)} - \sqrt{m+1} Z_{m+1} \right|^2 } \\
	&\qquad\qquad \leq \frac{1}{\log n}
\end{align*}
by the Cauchy--Schwarz inequality.  

Thus, by considering just the real part, we conclude that
\begin{align*}
	\Prob &\left( \left\{ \left| \sum_{m=0}^N z^m ( \xi_{m+1}^{(n)} + \alpha_{n,m+1} ) \right) \leq \delta \right\} \bigcap E_n \right) \\
	&\leq \Prob \left( \left| \sum_{m=0}^N z^m (\sqrt{m+1} Z_{m+1} + \alpha_{n,m+1} ) \right| \leq \delta + \frac{1}{\log n} \right) \\
	&\leq \Prob \left( \left| \sum_{m=0}^N \sqrt{m+1} \left(\Re(z^m) \Re(Z_{m+1}) - \Im(z^m) \Im(Z_{m+1})\right) + \Re(z^m)\alpha_{n,m+1} \right| \leq \delta + \frac{1}{\log n} \right) \\
	&\leq \sup_{x \in \mathbb{R}} \Prob \left( \left| \sum_{m=0}^N \sqrt{m+1} \left(\Re(z^m) \Re(Z_{m+1}) - \Im(z^m) \Im(Z_{m+1}) \right) + x \right| \leq \delta + \frac{1}{\log n} \right).
\end{align*}
We now consider two cases.  In the orthogonal, special orthogonal, or symplectic cases, we observe that $\Im(Z_{m+1}) = 0$ for $m = 0, \ldots, N$.  In this case, we have
\begin{align*}
	\sup_{x \in \mathbb{R}} &\Prob \left( \left| \sum_{m=0}^N \sqrt{m+1} \left(\Re(z^m) \Re(Z_{m+1}) - \Im(z^m) \Im(Z_{m+1}) \right) + x \right| \leq \delta + \frac{1}{\log n} \right) \\
		&= \sup_{x \in \mathbb{R}} \Prob \left( \left| \sum_{m=0}^N \sqrt{m+1} \Re(z^m) Z_{m+1} + x \right| \leq \delta + \frac{1}{\log n} \right) \\
		&= \sup_{x \in \mathbb{R}} \Prob \left( \left| \sigma_z Z_1 + x \right| \leq \delta + \frac{1}{\log n} \right),
\end{align*}
where 
$$ \sigma_z^2 := \sum_{m=0}^N |\Re(z^m)|^2 (m+1) \geq 1. $$
Here we used that $Z_1, \ldots, Z_{N+1}$ are iid real standard normal random variables, and hence any linear combination of $Z_1, \ldots, Z_{N+1}$ is also normal.  Thus, we conclude that 
\begin{align*} 
	\sup_{x \in \mathbb{R}} &\Prob \left( \left| \sum_{m=0}^N \sqrt{m+1} \left(\Re(z^m) \Re(Z_{m+1}) - \Im(z^m) \Im(Z_{m+1}) \right) + x \right| \leq \delta + \frac{1}{\log n} \right) \\
	&\leq \sup_{x \in \mathbb{R}} \Prob \left( \left| Z_1 + x \right| \leq \delta + \frac{1}{\log n} \right) \\
	&\leq \sup_{x \in \mathbb{R}} \Prob \left( \left| Z_1 + x \right| \leq \sqrt{2} \delta + \frac{\sqrt{2}}{\log n} \right).
\end{align*}  

For the unitary case, we observe that
\begin{align*}
	\sup_{x \in \mathbb{R}} &\Prob \left( \left| \sum_{m=0}^N \sqrt{m+1} \left(\Re(z^m) \Re(Z_{m+1}) - \Im(z^m) \Im(Z_{m+1}) \right) + x \right| \leq \delta + \frac{1}{\log n} \right) \\
	&\leq \sup_{x \in \mathbb{R}} \Prob \left( \left| \sigma_z' \Re(Z_1) + x \right| \leq \delta + \frac{1}{\log n} \right),
\end{align*}
where 
$$ \sigma_z'^2 := \sum_{m=0}^N |z|^{2m} (m+1) \geq 1. $$
Thus, by the same reasoning as in the other cases, we conclude that
\begin{align*}
	\sup_{x \in \mathbb{R}} &\Prob \left( \left| \sum_{m=0}^N \sqrt{m+1} \left(\Re(z^m) \Re(Z_{m+1}) - \Im(z^m) \Im(Z_{m+1}) \right) + x \right| \leq \delta + \frac{1}{\log n} \right) \\
	&\leq \sup_{x \in \mathbb{R}} \Prob \left( \left| \Re(Z_1) + x \right| \leq \delta + \frac{1}{\log n} \right) \\
	&= \sup_{x \in \mathbb{R}} \Prob \left( \left| Z' + x \right| \leq \sqrt{2}\delta + \frac{\sqrt{2}}{\log n} \right),
\end{align*}
where $Z'$ is a real standard normal random variable.  

Hence, in either case, we obtain
\begin{align*}
	\lim_{\delta \searrow 0} \limsup_{n \to \infty} \Prob &\left( \left\{ \left| \sum_{m=0}^N z^m ( \xi_{m+1}^{(n)} + \alpha_{n,m+1} ) \right) \leq \delta \right\} \bigcap E_n \right) \\
	&\leq \lim_{\delta \searrow 0} \limsup_{n \to \infty} \sup_{x \in \mathbb{R}} \Prob \left( \left| Z' + x \right| \leq \sqrt{2} \delta + \frac{\sqrt{2}}{\log n} \right) \\
	&\leq \lim_{\delta \searrow 0} \sup_{x \in \mathbb{R}} \Prob \left( \left| Z' + x \right| \leq \delta \right) = 0
\end{align*}
by a simple calculation involving the density of the standard normal distribution.  This verifies \eqref{eq:suffice:pw}, and the proof of Theorem \ref{thm:compact} is complete.

\subsection{Proof of Corollary \ref{cor:dependent}}

Let $\theta_1, \theta_2, \ldots$ be a sequence of iid random variables distributed uniformly on $[0,2 \pi)$.  For each $n \geq 1$, set
$$ p_{2n}(z) := \prod_{j=1}^{n} (z - e^{i \theta_j})(z - e^{- i \theta_j}) $$
and
$$ p_{2n - 1}(z) := (z - e^{i \theta_{n}}) \prod_{j=1}^{n-1} (z - e^{i \theta_j})(z - e^{- i \theta_j}). $$
We will apply Theorem \ref{thm:alternative} to show that $\mu_{p_n}'$ converges in probability to the uniform probability distribution on the unit circle centered at the origin as $n \to \infty$.  From this, the conclusion of Corollary \ref{cor:dependent} follows immediately.  

Define the triangular array $(\theta_j^{(n)})_{j \leq n}$ of random variables on $[0,2\pi)$ by 
$$ \theta_j^{(n)} := \left\{
	\begin{array}{ll}
		2 \pi - \theta_{j/2},& \text{ if } j \text{ even},\\
		\theta_{ (j+1)/2},& \text{ if } j \text { odd}.
	\end{array} \right. $$
It follows that, for all $n \geq 1$, 
$$ p_n(z) = \prod_{j=1}^n (z - e^{i \theta_j^{(n)}}). $$
Thus, it remains to show that the triangular array $(\theta_j^{(n)})_{j \leq n}$ satisfies the three assumptions of Theorem \ref{thm:alternative}.  

We begin by verifying condition \eqref{cond:alt:3} of Theorem \ref{thm:alternative}.  We observe that, for any integer $m \geq 1$, 
$$ \frac{1}{n} \sum_{j=1}^n e^{i \theta_j^{(n)} m} = \frac{1}{n} \sum_{\substack{ 1 \leq j \leq n \\ j \text{ even }} } e^{i \theta_j^{(n)} m} + \frac{1}{n} \sum_{\substack{ 1 \leq j \leq n \\ j \text{ odd} }} e^{i \theta_j^{(n)} m}. $$ 
Since both sums on the right-hand side are sums of iid random variables, we apply the law of large numbers twice to obtain
$$ \frac{1}{n} \sum_{j=1}^n e^{i \theta_j^{(n)} m} \longrightarrow 0 $$
almost surely as $n \to \infty$.  

Conditions \eqref{cond:alt:1} and \eqref{cond:alt:2} of Theorem \ref{thm:alternative} will follow from Lemma \ref{lemma:dependentsum} below.

\begin{lemma} \label{lemma:dependentsum} 
Let $f: [0,2\pi) \to \mathbb{R}$ be a function such that $f(\theta_1) + f(2 \pi -\theta_1)$ is non-degenerate.  Then, for any $\delta > 0$, 
$$ \limsup_{n \to \infty} \Prob \left( \left| \sum_{j=1}^n f( \theta_j^{(n)} ) \right| \leq \delta \right) = 0. $$
\end{lemma}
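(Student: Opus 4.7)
The plan is to exploit the symmetric structure of the triangular array $(\theta_j^{(n)})$ to rewrite the sum as a sum of iid random variables (up to one extra term when $n$ is odd), and then invoke a standard anti-concentration estimate for sums of iid non-degenerate variables.

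First, I would split into the cases $n$ even and $n$ odd. By the definition of $\theta_j^{(n)}$, if $n = 2k$ is even then
$$ \sum_{j=1}^{n} f\bigl(\theta_j^{(n)}\bigr) = \sum_{i=1}^{k} \bigl[ f(\theta_i) + f(2\pi - \theta_i) \bigr] = \sum_{i=1}^{k} Y_i, $$
where $Y_i := f(\theta_i) + f(2\pi - \theta_i)$. Since $\theta_1, \theta_2, \ldots$ are iid, the $Y_i$ are iid, and by the hypothesis of the lemma they are non-degenerate. If $n = 2k+1$ is odd, the same reasoning gives
$$ \sum_{j=1}^{n} f\bigl(\theta_j^{(n)}\bigr) = f(\theta_{k+1}) + \sum_{i=1}^{k} Y_i, $$
and the single extra summand $f(\theta_{k+1})$ is independent of $Y_1, \ldots, Y_k$.

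Next, I would invoke the following standard anti-concentration result: if $Y_1, Y_2, \ldots$ are iid non-degenerate real random variables, then for every fixed $\delta > 0$,
$$ \sup_{x \in \mathbb{R}} \Prob\!\left( \left| \sum_{i=1}^k Y_i + x \right| \leq \delta \right) \longrightarrow 0 \quad \text{as } k \to \infty. $$
This is a consequence of the Kolmogorov--Rogozin inequality (which in fact gives a $O(k^{-1/2})$ bound); see, for instance, Petrov's monograph on sums of independent random variables. Taking $x = 0$ handles the even case directly. For the odd case, I would condition on $\theta_{k+1}$ and apply the supremum form: writing $S_k = \sum_{i=1}^k Y_i$,
$$ \Prob\!\left( \left| f(\theta_{k+1}) + S_k \right| \leq \delta \right) = \E\!\left[ \Prob\bigl( |S_k + f(\theta_{k+1})| \leq \delta \,\big|\, \theta_{k+1} \bigr) \right] \leq \sup_{x \in \mathbb{R}} \Prob\bigl( |S_k + x| \leq \delta \bigr), $$
and this tends to $0$ as $k \to \infty$ by the anti-concentration estimate. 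Since both even and odd subsequences of probabilities tend to zero, the $\limsup$ is zero, as desired.

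There is no serious obstacle here; the whole argument is driven by the observation that the dependence in the triangular array is benign — it simply pairs $\theta_i$ with $2\pi - \theta_i$ — so non-degeneracy of the paired sum $Y_1 = f(\theta_1) + f(2\pi - \theta_1)$ is exactly the right hypothesis to run the classical iid anti-concentration machinery. The only mild care required is in the odd case, where one conditions on the unpaired variable to reduce to the iid statement.
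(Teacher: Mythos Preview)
Your proposal is correct and follows essentially the same approach as the paper: both split into even and odd $n$, rewrite the sum as a sum of the iid non-degenerate variables $Y_i = f(\theta_i) + f(2\pi - \theta_i)$ (plus one extra term when $n$ is odd), and then invoke the Kolmogorov--Rogozin-type anti-concentration bound from Petrov's monograph. The only cosmetic difference is that the paper cites \cite[Lemma~1.11]{P} to absorb the extra summand in the odd case, whereas you do the equivalent step by conditioning on $\theta_{k+1}$.
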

\begin{proof}
We consider two cases.  First, if $n$ is even, by \cite[Theorem 2.22]{P}, we have
\begin{align*}
	\Prob \left( \left| \sum_{j=1}^n f( \theta_j^{(n)} ) \right| \leq \delta \right) &\leq \Prob \left( \left| \sum_{j=1}^{n/2} ( f( \theta_j ) + f(2\pi - \theta_j)) \right| \leq \delta \right) \\
		&\leq \sup_{x \in \mathbb{R}} \Prob \left( x \leq \sum_{j=1}^{n/2} (f( \theta_j )+ f(2\pi-\theta_j)) \leq x + 2 \delta \right) \\
		&\leq C_f \frac{1 + 2 \delta}{n^{1/2}},
\end{align*}
where $C_f > 0$ is a constant that only depends on $f$.  In the case that $n > 1$ is odd, by \cite[Lemma 1.11]{P} and \cite[Theorem 2.22]{P}, we obtain
\begin{align*}
	\Prob \left( \left| \sum_{j=1}^n f( \theta_j^{(n)} ) \right| \leq \delta \right) &\leq \Prob \left( \left| \sum_{j=1}^{(n-1)/2} ( f( \theta_j ) + f(2\pi - \theta_j)) + f(\theta_n^{(n)}) \right| \leq \delta \right) \\
		&\leq \sup_{x \in \mathbb{R}} \Prob \left( x \leq \sum_{j=1}^{(n-1)/2} (f( \theta_j )+ f(2\pi-\theta_j)) + f(\theta_n^{(n)}) \leq x + 2 \delta \right) \\
		&\leq \sup_{x \in \mathbb{R}} \Prob \left( x \leq \sum_{j=1}^{(n-1)/2} (f( \theta_j )+ f(2\pi-\theta_j)) \leq x + 2 \delta \right) \\
		&\leq C_f \frac{1 + 2 \delta}{(n-1)^{1/2}},
\end{align*}
and the proof is complete.  
\end{proof}

To verify condition \eqref{cond:alt:1} of Theorem \ref{thm:alternative}, we note that, for any $\delta > 0$, 
\begin{align*}
	\Prob \left( \left| \sum_{j=1}^n e^{- i\theta_j^{(n)}} \right| \leq \delta \right) \leq \Prob \left( \left| \sum_{j=1}^n \cos (\theta_j^{(n)} ) \right| \leq \delta \right).
\end{align*}
Thus, by Lemma \ref{lemma:dependentsum}, we conclude that, for any $\delta > 0$, 
$$ \limsup_{n \to \infty} \Prob \left( \left| \sum_{j=1}^n e^{- i\theta_j^{(n)}} \right| \leq \delta \right) = 0. $$

It remains to verify condition \eqref{cond:alt:2} of Theorem \ref{thm:alternative}.  To this end, fix $z \in \mathbb{D}$, and let $\eta > 0$.  As 
$$ \left| \sum_{j=1}^n \frac{1}{z - e^{- i\theta_j^{(n)}}} \right| \leq \sum_{j=1}^n \left| \frac{1}{ z - e^{-i \theta_j^{(n)}}} \right| \leq \frac{n}{1 - |z|}, $$
it follows that
$$ \lim_{n \to \infty} \Prob \left( \frac{1}{n} \log \left| \sum_{j=1}^n \frac{1}{z - e^{- i\theta_j^{(n)}}} \right| > \eta \right) = 0. $$
On the other hand,
\begin{align*}
	\Prob \left( \frac{1}{n} \log \left| \sum_{j=1}^n \frac{1}{z - e^{- i\theta_j^{(n)}}} \right| < -\eta \right) &\leq \Prob \left( \left| \sum_{j=1}^n \frac{1}{z - e^{- i\theta_j^{(n)}}} \right| < e^{- n \eta} \right) \\
		&\leq \Prob \left( \left| \sum_{j=1}^n \frac{ \Re(z) - \cos(\theta_j^{(n)})}{\left|z - e^{-i \theta_j^{(n)}} \right|^2} \right| < e^{- n \eta} \right), 
\end{align*}
and hence 
$$ \lim_{n \to \infty} \Prob \left( \frac{1}{n} \log \left| \sum_{j=1}^n \frac{1}{z - e^{- i\theta_j^{(n)}}} \right| < -\eta \right) = 0 $$
by Lemma \ref{lemma:dependentsum}.  Therefore, we conclude that 
$$ \frac{1}{n} \log \left| \sum_{j=1}^n \frac{1}{z - e^{- i\theta_j^{(n)}}} \right| \longrightarrow 0 $$
in probability as $n \to \infty$, and the proof of Corollary \ref{cor:dependent} is complete.

\section{Proof of Theorems \ref{thm:main} and \ref{thm:alternative}} \label{sec:proof}

This section is devoted to the proof of Theorems \ref{thm:main} and \ref{thm:alternative}.  

\subsection{Convergence of radial components implies convergence of the empirical measures}
Both Theorems \ref{thm:main} and \ref{thm:alternative} will follow from Lemma \ref{lemma:radii} below.

\begin{lemma}[Convergence of radial components implies convergence of measures] \label{lemma:radii}
For each $n \geq 1$, let $\theta_1^{(n)}, \ldots, \theta_n^{(n)}$ be random variables on $[0,2\pi)$, and set 
$$ p_n(z) := \prod_{j=1}^n (z - e^{i \theta_j^{(n)}}). $$
Let $r_1^{(n)} e^{i \phi_1^{(n)}}, \ldots, r_{n-1}^{(n)} e^{i \phi_{n-1}^{(n)}}$ be the zeros of $p_n'$ in polar form.  Assume
\begin{enumerate}[(i)]
\item for all integers $m \geq 1$, 
\begin{equation} \label{eq:moments}
	\frac{1}{n} \sum_{j=1}^n e^{i m \theta_j^{(n)}} \longrightarrow 0 
\end{equation}
in probability as $n \to \infty$,
\item \label{item:radii:2} we have
\begin{equation} \label{eq:radii}
	\frac{1}{n-1} \sum_{j=1}^{n-1} (1 - r_j^{(n)}) \longrightarrow 0 
\end{equation}
in probability as $n \to \infty$.   
\end{enumerate}
Then $\mu_{p_n}'$ converges in probability as $n \to \infty$ to the uniform probability distribution on the unit circle centered at the origin.  
\end{lemma}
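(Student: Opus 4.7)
The plan is to establish weak convergence via the moment method. By the Gauss--Lucas theorem (Theorem \ref{thm:gauss}), every critical point $w_k^{(n)} = r_k^{(n)} e^{i\phi_k^{(n)}}$ lies in the convex hull of the roots, namely the closed unit disk $\overline{\mathbb{D}}$. Since both $\mu'_{p_n}$ and the target uniform measure on the unit circle are supported in the compact set $\overline{\mathbb{D}}$, and polynomials in $z$ and $\bar z$ are uniformly dense in $C(\overline{\mathbb{D}})$ by Stone--Weierstrass, it suffices to prove
\[
\frac{1}{n-1}\sum_{k=1}^{n-1}\bigl(w_k^{(n)}\bigr)^{a}\,\overline{\bigl(w_k^{(n)}\bigr)}^{\,b}\;\longrightarrow\;\delta_{a,b}
\]
in probability as $n\to\infty$, for every pair of non-negative integers $a,b$.

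The diagonal case $a=b$ follows directly from the radial hypothesis \eqref{eq:radii}: each summand equals $(r_k^{(n)})^{2a}$, and the elementary bound $0\le 1-r^{2a}\le 2a(1-r)$ valid on $[0,1]$ yields
\[
\Bigl|1-\frac{1}{n-1}\sum_{k=1}^{n-1}(r_k^{(n)})^{2a}\Bigr|\le \frac{2a}{n-1}\sum_{k=1}^{n-1}(1-r_k^{(n)})\;\longrightarrow\; 0.
\]
For the off-diagonal case, by conjugate symmetry I may assume $a>b\ge 0$, and the decomposition
\[
(w_k^{(n)})^{a}\,\overline{(w_k^{(n)})}^{\,b}=(w_k^{(n)})^{a-b}\;-\;\bigl(1-|w_k^{(n)}|^{2b}\bigr)(w_k^{(n)})^{a-b}
\]
combined with $|1-r^{2b}|\le 2b(1-r)$ and $|w_k^{(n)}|\le 1$ reduces matters, via \eqref{eq:radii} applied to the error term, to showing that for every fixed integer $m\ge 1$,
\[
\frac{P_m}{n-1}\longrightarrow 0\quad\text{in probability},\qquad P_m:=\sum_{k=1}^{n-1}\bigl(w_k^{(n)}\bigr)^m.
\]

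To analyze $P_m$, I would derive a closed-form expression for it in terms of the power sums $p_m:=\sum_{j=1}^{n}e^{im\theta_j^{(n)}}$ of the roots by generating functions. For $|z|>1$,
\[
\frac{p_n'(z)}{p_n(z)}=\sum_{j=1}^{n}\frac{1}{z-e^{i\theta_j^{(n)}}}=\frac{n}{z}\bigl(1+f(z)\bigr),\qquad f(z):=\sum_{m\ge 1}\frac{p_m}{n\,z^m},
\]
while $\log p_n(z)=n\log z-\sum_{m\ge 1}p_m/(mz^m)$ and (using Gauss--Lucas to ensure all zeros of $p_n'$ lie in $\overline{\mathbb{D}}$) $\log p_n'(z)=\log n+(n-1)\log z-\sum_{m\ge 1}P_m/(mz^m)$. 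Subtracting these Laurent expansions and matching with $\log\bigl(p_n'(z)/p_n(z)\bigr)=\log(n/z)+\log(1+f(z))$ gives
\[
\sum_{m\ge 1}\frac{p_m-P_m}{m\,z^m}=\log\bigl(1+f(z)\bigr).
\]
Reading off the coefficient of $z^{-m}$ via $\log(1+w)=\sum_{j\ge 1}(-1)^{j-1}w^j/j$ produces the identity
\[
P_m=\frac{n-m}{n}\,p_m\;-\;m\sum_{j=2}^{m}\frac{(-1)^{j-1}}{j}\sum_{\substack{k_1+\cdots+k_j=m\\ k_i\ge 1}}\prod_{i=1}^{j}\frac{p_{k_i}}{n}.
\]

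To conclude, I divide both sides by $n-1$. The leading term becomes $\tfrac{n-m}{n-1}\cdot\tfrac{p_m}{n}$, which tends to zero in probability by \eqref{eq:moments}. Each of the finitely many remaining summands is a product of at least two factors $p_{k_i}/n$, each tending to zero in probability by the same hypothesis, so the whole quantity tends to zero in probability. The main obstacle is obtaining the closed-form identity for $P_m$ cleanly (one could alternatively proceed inductively from Newton's identities applied to the critical points, using $E_k=\tfrac{n-k}{n}e_k$, but the generating-function route seems more transparent); once this identity is in hand, condition \eqref{eq:moments} controls the angular/power-sum structure of the critical points and condition \eqref{eq:radii} controls their radial concentration, and together they deliver weak convergence to the uniform distribution on the unit circle.
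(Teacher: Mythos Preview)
Your argument is correct, and at the strategic level it parallels the paper's: both proofs first show that the power sums of the critical points track those of the roots, and then combine this with the radial hypothesis~\eqref{eq:radii} to deduce weak convergence. The execution, however, is genuinely different in two places. First, to obtain $\tfrac{1}{n-1}\sum_k (w_k^{(n)})^m \to 0$, the paper invokes a companion-matrix identity of Cheung--Ng (Lemma~\ref{lemma:companion}) and expands $\tr(D-\tfrac1nDJ-\tfrac{x_n}{n}J)^k$ term by term (Lemma~\ref{lemma:moments}), whereas you derive the explicit Newton-type identity for $P_m$ directly from the Laurent expansion of $\log(p_n'/p_n)$; your route is self-contained and yields the same $O(1)$ discrepancy between $P_m$ and $\tfrac{n-m}{n}p_m$. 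Second, for the passage from power sums to weak convergence, the paper first uses~\eqref{eq:radii} to strip off the radii and conclude that $\tfrac{1}{n-1}\sum_k e^{im\phi_k^{(n)}}\to 0$, then approximates $f(e^{it})$ by trigonometric polynomials; you instead work with mixed moments $w^a\bar w^b$ on $\overline{\mathbb D}$ via Stone--Weierstrass, handling the diagonal $a=b$ by~\eqref{eq:radii} and reducing the off-diagonal to pure power sums. Your reduction is slightly more direct since it never isolates the angular variables, while the paper's formulation has the advantage that Lemma~\ref{lemma:moments} is stated for arbitrary bounded roots, not just roots on the unit circle.
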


\begin{remark}
By the Gauss--Lucas theorem (Theorem \ref{thm:gauss}), it follows that 
$$ \sup_{n \geq 1} \max_{1 \leq j \leq n} r_j^{(n)} \leq 1. $$  
Thus, condition \eqref{item:radii:2} of Lemma \ref{lemma:radii} implies that most of the roots of $p_n'$ are close to the unit circle centered at the origin.  
\end{remark}

The remainder of this subsection will be devoted to proving Lemma \ref{lemma:radii}.  In particular, we will need the following result, which is adapted from \cite[Proposition 3.2]{S}.  

\begin{lemma} \label{lemma:moments}
Let $n \geq 2$.  Let $x_1, \ldots, x_n \in \mathbb{C}$ with $|x_j| \leq \tau$ for all $1 \leq j \leq n$.  Let $y_1, \ldots, y_{n-1}$ be the critical points of $p(z) := \prod_{j=1}^n (z-x_j)$.  Then, for any integer $k \geq 1$, there exists a constant $C > 0$ (depending only on $\tau$ and $k$) such that
$$ \left| \frac{1}{n} \sum_{j=1}^n x_j^k - \frac{1}{n-1} \sum_{j=1}^{n-1} y_j^k \right| \leq \frac{C}{n-1}. $$
\end{lemma}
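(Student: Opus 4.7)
I would proceed by comparing the Laurent expansions at infinity of the logarithmic derivatives $p'/p$ and $p''/p'$. By the Gauss--Lucas theorem (Theorem \ref{thm:gauss}), every critical point satisfies $|y_j|\le\tau$. Writing $P_k(x):=\sum_{j=1}^n x_j^k$ and $P_k(y):=\sum_{j=1}^{n-1} y_j^k$, the geometric series for $1/(z-w)$ at infinity gives, for all $|z|>\tau$,
$$ \frac{p'(z)}{p(z)}=\sum_{k\ge 0}\frac{P_k(x)}{z^{k+1}}, \qquad \frac{p''(z)}{p'(z)}=\sum_{k\ge 0}\frac{P_k(y)}{z^{k+1}}, $$
both series converging absolutely.

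The key step is to exploit the rational identity
$$ \frac{p''(z)}{p'(z)}-\frac{p'(z)}{p(z)}=\frac{d}{dz}\log\frac{p'(z)}{p(z)}, $$
which may be verified directly by computing $((p'/p)')/(p'/p)$. Factor $p'(z)/p(z)=(n/z)(1+R(z))$ with $R(z):=\sum_{k\ge 1}(P_k(x)/n)\,z^{-k}$; then the right-hand side equals $-z^{-1}+R'(z)/(1+R(z))$. Matching the coefficient of $z^{-k-1}$ on both sides for each $k\ge 1$ yields
$$ P_k(y)-P_k(x)=[z^{-k-1}]\,\frac{R'(z)}{1+R(z)}. $$
Expanding $R'/(1+R)=R'\sum_{j\ge 0}(-R)^j$ shows that this coefficient is a finite integer linear combination (with weights depending only on $k$) of products $P_{i_1}(x)\cdots P_{i_s}(x)/n^s$ with $i_1+\cdots+i_s\le k$. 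Since $|P_i(x)/n|\le\tau^i$, this produces a uniform bound $|P_k(y)-P_k(x)|\le C_k(\tau)$ independent of $n$.

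Finally, I would close with the algebraic rearrangement
$$ \frac{P_k(x)}{n}-\frac{P_k(y)}{n-1}=-\frac{P_k(x)}{n(n-1)}-\frac{P_k(y)-P_k(x)}{n-1}, $$
which, combined with $|P_k(x)|\le n\tau^k$ and the bound just obtained, yields the desired estimate of order $1/(n-1)$. The only mild obstacle is the combinatorial book-keeping behind $|P_k(y)-P_k(x)|\le C_k(\tau)$; this is genuinely routine because only finitely many terms in the Neumann expansion of $R'/(1+R)$ contribute to the coefficient of $z^{-k-1}$, each controlled by $\tau^k$ up to a constant depending on $k$ alone.
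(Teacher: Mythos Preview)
Your argument is correct and genuinely different from the paper's. Both proofs reduce to the same intermediate estimate
\[
\left|\sum_{j=1}^{n-1}y_j^k-\sum_{j=1}^{n-1}x_j^k\right|\le C_k(\tau),
\]
but establish it by unrelated means. The paper invokes the Cheung--Ng companion matrix lemma (Lemma~\ref{lemma:companion}): the critical points are the eigenvalues of $D(I_{n-1}-\tfrac{1}{n}J)+\tfrac{x_n}{n}J$, so one expands $\tr(\cdot)^k$ into $3^k$ monomials in $D$, $DJ$, $J$ and bounds each non-$D^k$ term by hand using $J^m=(n-1)^{m-1}J$. Your route avoids the external matrix lemma entirely, replacing it with the logarithmic-derivative identity $p''/p'-p'/p=(p'/p)'/(p'/p)$ and a Laurent/Neumann expansion at infinity; the bookkeeping is essentially the same combinatorics (compositions of $k$) but dressed in power-series language rather than matrix-product language. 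What your approach buys is self-containment and a cleaner accounting of why the indices in each product sum to exactly $k$ (so that $\tau^k$ falls out immediately); what the paper's approach buys is an explicit algebraic object whose spectrum \emph{is} the set of critical points, which can be useful beyond this particular estimate. Both are equally valid here.
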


In order to prove Lemma \ref{lemma:moments}, we will need the following result from \cite{CN}. 

\begin{lemma} \label{lemma:companion}
Let $n \geq 2$.  If $x_1, \ldots, x_n \in \mathbb{C}$ are the roots of $p(z) := \prod_{j=1}^n (z - x_j)$, and $p$ has critical points $y_1, \ldots, y_{n-1}$, then the matrix
$$ D \left(I_{n-1} - \frac{1}{n} J \right) + \frac{x_n}{n} J $$
has $y_1, \ldots, y_{n-1}$ as its eigenvalues, where $D = \diag(x_1, \ldots,x_{n-1})$, $I_{n-1}$ is the identity matrix of order $n-1$, and $J$ is the $(n-1) \times (n-1)$ matrix of all entries $1$.  
\end{lemma}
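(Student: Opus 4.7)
My plan is to recognize the matrix
$$ A := D \left(I_{n-1} - \frac{1}{n} J \right) + \frac{x_n}{n} J $$
as a rank-one perturbation of a diagonal matrix, compute its characteristic polynomial via the matrix determinant lemma, and match the resulting expression with $p'(z)/n$.

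First, I would rewrite $A$ in a more convenient form. Since $J = \mathbf{1} \mathbf{1}^{\mathrm{T}}$, where $\mathbf{1} \in \mathbb{C}^{n-1}$ is the all-ones column vector, we have $D J = (D\mathbf{1}) \mathbf{1}^{\mathrm{T}}$, and so
$$ A = D + u \mathbf{1}^{\mathrm{T}}, \qquad \text{where } u_i := \frac{x_n - x_i}{n}, \quad i = 1, \ldots, n-1. $$
This exhibits $A$ as a diagonal matrix plus a rank-one term, which is ideal for the matrix determinant lemma.

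Next, I would apply the matrix determinant lemma to obtain
$$ \det(z I_{n-1} - A) = \det(z I_{n-1} - D) \bigl(1 - \mathbf{1}^{\mathrm{T}} (z I_{n-1} - D)^{-1} u \bigr) $$
for $z \notin \{x_1, \ldots, x_{n-1}\}$. Since $(z I_{n-1} - D)^{-1}$ is diagonal with entries $1/(z-x_i)$, the inner product expands to
$$ \mathbf{1}^{\mathrm{T}} (z I_{n-1} - D)^{-1} u = \frac{1}{n} \sum_{i=1}^{n-1} \frac{x_n - x_i}{z - x_i} = \frac{n-1}{n} - \frac{z - x_n}{n} \sum_{i=1}^{n-1} \frac{1}{z - x_i}, $$
using the identity $\frac{x_n - x_i}{z - x_i} = 1 - \frac{z - x_n}{z - x_i}$. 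Substituting this back yields
$$ \det(z I_{n-1} - A) = \frac{1}{n} \prod_{i=1}^{n-1}(z - x_i) + \frac{z - x_n}{n} \sum_{i=1}^{n-1} \prod_{\substack{1 \leq j \leq n-1 \\ j \neq i}} (z - x_j). $$

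Finally, I would identify this expression with $\frac{1}{n} p'(z)$. Expanding the derivative by the product rule gives
$$ p'(z) = \prod_{j=1}^{n-1}(z - x_j) + \sum_{i=1}^{n-1} (z - x_n) \prod_{\substack{1 \leq j \leq n-1 \\ j \neq i}} (z - x_j), $$
where the first term corresponds to omitting the factor $(z - x_n)$ and the sum corresponds to omitting the factor $(z - x_i)$ for $i \leq n-1$. Comparing the two displays, we conclude $\det(z I_{n-1} - A) = \frac{1}{n} p'(z)$ for all $z$ outside a finite set, and hence identically as polynomials in $z$. Both sides are monic of degree $n-1$ (the leading coefficient of $p'$ is $n$), so the eigenvalues of $A$ are exactly the roots $y_1, \ldots, y_{n-1}$ of $p'$. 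The main obstacle is purely bookkeeping in the partial-fraction step; the rest is a direct application of the matrix determinant lemma.
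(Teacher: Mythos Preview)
Your proof is correct. The paper does not prove this lemma at all; it simply cites the result from \cite{CN} (Cheung--Ng) and uses it as a black box in the proof of Lemma~\ref{lemma:moments}. Your argument supplies a self-contained derivation: writing $A = D + u\mathbf{1}^{\mathrm{T}}$ with $u_i = (x_n - x_i)/n$ and applying the matrix determinant lemma is exactly the natural way to handle a diagonal-plus-rank-one matrix, and the partial-fraction bookkeeping is clean. The identification $\det(zI_{n-1}-A) = \tfrac{1}{n}p'(z)$ on the cofinite set where $zI_{n-1}-D$ is invertible, followed by extension to all of $\mathbb{C}$ by polynomiality, is entirely valid. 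There is nothing to add.
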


We now prove Lemma \ref{lemma:moments}.

\begin{proof}[Proof of Lemma \ref{lemma:moments}]
The proof presented here is adapted from the proof given in \cite{S}.  We observe that it suffices to show
$$ \left| \sum_{j=1}^{n-1} x_j^k - \sum_{j=1}^{n-1} y_j^k \right| \leq C, $$
where $C > 0$ depends only on $\tau$ and $k$.

Let $D = \diag(x_1, \ldots,x_{n-1})$.  Then, by Lemma \ref{lemma:companion}, it follows that
$$ \sum_{j=1}^{n-1} y_j^k = \tr \left( D - \frac{1}{n} DJ - \frac{x_n}{n} J \right)^k, $$
where $J$ is the $(n-1) \times (n-1)$ matrix of all entries $1$.  Thus, it suffices to show
\begin{equation} \label{eq:traceshow}
	\left| \tr \left( D - \frac{1}{n} DJ - \frac{x_n}{n} J \right)^k - \tr D^k \right| \leq C. 
\end{equation}

We note that $\left( D - \frac{1}{n} DJ - \frac{x_n}{n} J \right)^k$ can be written as the sum over all terms of the form
\begin{equation} \label{eq:expandterm}
	D^{l_1} \left( - \frac{1}{n} D J \right)^{l_2} \left( \frac{x_n}{n} J \right)^{l_3} \cdots D^{l_{3k-2}} \left( - \frac{1}{n} D J \right)^{l_{3k - 1}} \left( \frac{x_n}{n} J \right)^{l_{3k}}, 
\end{equation}
where $l_1, \ldots, l_{3k}$ are non-negative integers such that $l_{3j-2} + l_{3j-1} + l_{3j} = 1$ for each $1 \leq j \leq k$.  The total number of such terms is $3^k$.  One of the terms is $D^k$.  We will show that the each of the remaining $3^k-1$ terms can be uniformly bounded by a constant which only depends on $\tau$ and $k$.  

Fix $l_1, \ldots, l_{3k}$ such that the term given in \eqref{eq:expandterm} is not $D^k$.  In order to simplify the expression in \eqref{eq:expandterm}, we observe that
$$ J^m = (n-1)^{m-1} J $$
for all $m \geq 1$.  We also have 
$$ (D^p J) (D^q J) = \left( \sum_{j=1}^{n-1} x_j^q\right) (D^p J), $$
for any $p,q \geq 0$.  

Thus, the term in \eqref{eq:expandterm} can be written as
\begin{equation} \label{eq:expandterm2}
	(-1)^p x_n^q \left(\frac{n-1}{n} \right)^{s_0} \left( \frac{ \sum_{j=1}^{n-1} x_j }{n} \right)^{s_1} \cdots \left( \frac{ \sum_{j=1}^{n-1} x_j^{k-1} }{n} \right)^{s_{k-1}} M, 
\end{equation}
where $p,q,s_0, \ldots, s_{k-1}$ are non-negative integers no larger than $k$, and $M$ is a $(n-1) \times (n-1)$ matrix.  In particular, $M$ is of the form $\frac{1}{n} D^m J$ or $\frac{1}{n}D^{m_1} J D^{m_2}$ for some non-negative integers $m, m_1, m_2$ which are no larger than $k$.  

The scalar term in \eqref{eq:expandterm2} can be uniformly bounded by a constant depending only on $\tau$ and $k$ since $\max_{1 \leq j \leq n} |x_j| \leq \tau$.  If $M = \frac{1}{n} D^m J$, then
$$ \left| \tr (M) \right| = \frac{1}{n} \left| \sum_{j=1}^{n-1} x_j^m \right| \leq \frac{n-1}{n} \tau^m \leq \tau^k $$
since $m \leq k$.  Similarly, if $M = \frac{1}{n}D^{m_1} J D^{m_2}$, then
$$ \left| \tr(M) \right| = \frac{1}{n} \left| \tr(D^{m_1 + m_2} J) \right| = \frac{1}{n} \left| \sum_{j=1}^{n-1} x_j^{m_1 + m_2} \right| \leq \tau^{2k} $$
because $m_1,m_2 \leq k$.  

Combining the bounds above yields \eqref{eq:traceshow}, and the proof is complete.  
\end{proof}

With Lemma \ref{lemma:moments} in hand, we can now prove Lemma \ref{lemma:radii}.  

\begin{proof}[Proof of Lemma \ref{lemma:radii}]
By \eqref{eq:moments} and Lemma \ref{lemma:moments}, it follows that, for each $m \geq 1$, 
$$ \frac{1}{n-1} \sum_{j=1}^{n-1} \left( r_j^{(n)} \right)^m e^{i \phi_j^{(n)} m} \longrightarrow 0 $$
in probability as $n \to \infty$.  By the Gauss--Lucas theorem (Theorem \ref{thm:gauss}), 
$$ \sup_{n \geq 1} \max_{1 \leq j \leq n } r_j^{(n)} \leq 1. $$ 
Thus,
\begin{align*}
	\frac{1}{n-1} \left| \sum_{j=1}^{n-1} \left( r_j^{(n)} \right)^m e^{i \phi_j^{(n)} m} - \sum_{j=1}^{n-1} e^{i \phi_j^{(n)} m} \right| &\leq \frac{1}{n-1} \sum_{j=1}^{n-1} \left| 1 - \left( r_j^{(n)} \right)^m \right| \\
		&\leq \frac{C_m}{n-1} \sum_{j=1}^{n-1} \left( 1 - r_j^{(n)} \right),
\end{align*}
where $C_m > 0$ depends only on $m$.  Hence, by \eqref{eq:radii}, we conclude that, for any $m \geq 1$, 
$$ \frac{1}{n-1} \sum_{j=1}^{n-1} e^{i m \phi_j^{(n)}} \longrightarrow 0 $$
in probability as $n \to \infty$.  This also implies that, for any $m \geq 1$,
$$ \frac{1}{n-1} \sum_{j=1}^{n-1} e^{-im \phi_j^{(n)}} \longrightarrow 0 $$
in probability as $n \to \infty$.  In other words, for any trigonometric polynomial $q$, 
\begin{equation} \label{eq:trigpoly}
	\frac{1}{n-1} \sum_{j=1}^{n-1} q(\phi_j^{(n)}) \longrightarrow \E[q(\xi)]
\end{equation}
in probability as $n \to \infty$, where $\xi$ is a random variable uniformly distributed on $[0,2\pi)$.  

Let $f:\mathbb{C} \to \mathbb{R}$ be a bounded Lipschitz continuous function.  By the Portemanteau theorem (see, for example, \cite[Theorem 13.16]{Kprob}), it suffices to show that 
$$ \frac{1}{n-1} \sum_{j=1}^{n-1} f \left( r_j^{(n)} e^{i \phi_j^{(n)}} \right) \longrightarrow \E[ f(e^{i \xi}) ] $$
in probability as $n \to \infty$.  

Let $\eps > 0$.  By \cite[Theorem 4.25]{R}, there exists a trigonometric polynomial $q$ such that 
\begin{equation} \label{eq:trigapprox}
	\sup_{t \in [0,2\pi]} |f(e^{it}) - q(t)| \leq \eps. 
\end{equation}
Then, by the triangle inequality, we have
\begin{align*}
	\left| \frac{1}{n-1} \sum_{j=1}^{n-1} f \left( r_j^{(n)} e^{i \phi_j^{(n)}} \right) - \E[ f(e^{i\xi}) ] \right| &\leq \frac{1}{n-1}\left| \sum_{j=1}^{n-1} f \left( r_j^{(n)} e^{i \phi_j^{(n)}} \right) -  \sum_{j=1}^{n-1} f \left( e^{i \phi_j^{(n)}} \right) \right| \\
	&\qquad + \frac{1}{n-1}\left| \sum_{j=1}^{n-1} f \left( e^{i \phi_j^{(n)}} \right) - \sum_{j=1}^{n-1} q \left( { \phi_j^{(n)}} \right) \right| \\
	&\qquad + \left| \frac{1}{n-1} \sum_{j=1}^{n-1} q \left( { \phi_j^{(n)}} \right) - \E[q(\xi)] \right| \\
	&\qquad + \left| \E[q(\xi)] - \E[f(e^{i \xi})] \right|.
\end{align*}
Since $f$ is Lipschitz continuous, we obtain
$$ \frac{1}{n-1}\left| \sum_{j=1}^{n-1} f \left( r_j^{(n)} e^{i \phi_j^{(n)}} \right) -  \sum_{j=1}^{n-1} f \left( e^{i \phi_j^{(n)}} \right) \right| \leq \frac{C_f}{n-1} \sum_{j=1}^{n-1} \left( 1- r_j^{(n)} \right), $$
where $C_f$ is the Lipschitz constant of $f$.  By \eqref{eq:trigapprox}, we have
$$ \frac{1}{n-1}\left| \sum_{j=1}^{n-1} f \left( e^{i \phi_j^{(n)}} \right) - \sum_{j=1}^{n-1} q \left( { \phi_j^{(n)}} \right) \right| \leq \eps $$
and
$$ \left| \E[q(\xi)] - \E[f(e^{i\xi})] \right| \leq \eps. $$
Thus, we conclude that
\begin{align*}
	&\left| \frac{1}{n-1} \sum_{j=1}^{n-1} f \left( r_j^{(n)} e^{i \phi_j^{(n)}} \right) - \E[ f(e^{i\xi}) ] \right| \\
		&\qquad\qquad\qquad \leq \frac{C_f}{n-1} \sum_{j=1}^{n-1} \left( 1- r_j^{(n)} \right) + \left| \frac{1}{n-1} \sum_{j=1}^{n-1} q \left( { \phi_j^{(n)}} \right) - \E[q(\xi)] \right| + 2\eps.
\end{align*}
The claim now follows from \eqref{eq:radii} and \eqref{eq:trigpoly}.  
\end{proof}

\subsection{Convergence of the radial components}

In order to apply Lemma \ref{lemma:radii}, we must verify the convergence in \eqref{eq:radii}.  We do so in the following lemmata.  

\begin{lemma} \label{lemma:main}
For each $n \geq 1$, let $\theta_1^{(n)}, \ldots, \theta_n^{(n)}$ be random variables on $[0,2\pi)$, and set 
$$ p_n(z) := \prod_{j=1}^n (z - e^{i \theta_j^{(n)}}). $$
Let $\zeta_1^{(n)}, \ldots, \zeta_{n-1}^{(n)}$ be the zeros of $p_n'$.  Assume
\begin{enumerate}[(i)]
\item we have
$$ \lim_{\delta \searrow 0} \limsup_{n \to \infty} \Prob \left( \left| \sum_{j=1}^n e^{-i \theta_j^{(n)}} \right| \leq \delta \right) = 0, $$
\item for almost every $z \in \mathbb{D}$,
$$ \lim_{\delta \searrow 0} \limsup_{n \to \infty} \Prob \left( \left| \sum_{m=0}^{\lfloor \log^2 n \rfloor} z^m \sum_{j=1}^n e^{-i \theta_j^{(n)} (m+1)} \right| \leq \delta \right) = 0. $$
\end{enumerate}
Then, for any $0 < \eps < 1$ and for every infinitely differentiable function $\varphi:\mathbb{C} \to \mathbb{R}$ supported on $\mathbb{D}_{1-\eps}$,
$$ \frac{1}{n-1} \sum_{j=1}^{n-1} \varphi( \zeta_{j}^{(n)} ) \longrightarrow 0 $$
in probability as $n \to \infty$.  
\end{lemma}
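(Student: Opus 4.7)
My plan is to show that, with probability tending to $1$, the number of critical points of $p_n$ lying in $\mathbb{D}_{1-\eps}$ is only $O_\eps(\log n)$. Since $\varphi$ is bounded and supported in $\mathbb{D}_{1-\eps}$, this immediately yields
\[
\left|\frac{1}{n-1}\sum_{j=1}^{n-1}\varphi\bigl(\zeta_j^{(n)}\bigr)\right| \leq \frac{\|\varphi\|_\infty}{n-1}\cdot\#\!\left\{j : \zeta_j^{(n)}\in \mathbb{D}_{1-\eps}\right\} = O_{\eps,\varphi}\!\left(\frac{\log n}{n}\right)
\]
on the corresponding event, which is more than enough.

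To obtain the count bound, I would work with the logarithmic derivative $P(z) := p_n'(z)/p_n(z) = \sum_{j=1}^n (z-e^{i\theta_j^{(n)}})^{-1}$. Because all roots of $p_n$ lie on the unit circle, $P$ is holomorphic on $\mathbb{D}$, and its zeros in $\mathbb{D}_r$ (for any $r<1$) are exactly the critical points of $p_n$ in $\mathbb{D}_r$, counted with multiplicity. Fix $R := 1-\eps/2$. On the circle $|z|=R$, the trivial estimate $|z-e^{i\theta}|\geq \eps/2$ gives $|P(z)|\leq 2n/\eps$, hence $\log|P(Re^{i\theta})| \leq \log(2n/\eps) = O_\eps(\log n)$. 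At the origin, $P(0) = -\sum_j e^{-i\theta_j^{(n)}}$, and assumption (i) is precisely the statement that $|P(0)|$ does not concentrate at zero: given any $\eta > 0$, I can find $\delta>0$ and an event $E_n$ with $\Prob(E_n)\geq 1-\eta$ (for all $n$ large) on which $|P(0)|\geq\delta$.

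On $E_n$ we have $P(0)\neq 0$, so Jensen's formula for $P$ on $\overline{\mathbb{D}_R}$ yields
\[
\sum_{a\in\mathbb{D}_R,\,P(a)=0}\log\frac{R}{|a|} = \frac{1}{2\pi}\int_0^{2\pi}\log|P(Re^{i\theta})|\,d\theta - \log|P(0)| \leq \log\frac{2n}{\eps} + \log\frac{1}{\delta} = O_{\eps,\delta}(\log n).
\]
Every zero $a$ with $|a|\leq 1-\eps$ contributes at least $\log(R/(1-\eps))>0$ to the non-negative left-hand side, forcing $\#\{\zeta_j^{(n)}\in\mathbb{D}_{1-\eps}\} = O_{\eps,\delta}(\log n)$ on $E_n$. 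Letting $\eta\searrow 0$ then delivers convergence in probability.

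The main technicality to watch is ensuring $P(0)\neq 0$ before invoking Jensen, which is exactly the role of assumption (i); condition (ii) does not appear to be needed along this route. A heavier alternative would start from the distributional identity $\frac{1}{n-1}\sum_j\varphi(\zeta_j^{(n)}) = \frac{1}{2\pi(n-1)}\int \Delta\varphi(z)\,\log|p_n'(z)|\,dz$, exploit the cancellation $\int\Delta\varphi(z)\,\log|p_n(z)|\,dz=0$ (from the Taylor expansion $\log|p_n(z)| = -\Re\sum_{m\geq 1} z^m S_m^{(n)}/m$ together with the harmonicity of $\Re(z^m)$), and then use condition (ii) to show that $n^{-1}\int\Delta\varphi(z)\,\log|P(z)|\,dz\to 0$ in probability via pointwise control of $n^{-1}\log|P(z)|$; but the Jensen-formula approach above is cleaner and gives the stronger conclusion that only $O_\eps(\log n)$ critical points lie strictly inside $\mathbb{D}_{1-\eps}$.
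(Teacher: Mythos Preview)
Your argument is correct and takes a genuinely different route from the paper's proof. The paper proceeds via logarithmic potential theory: writing $L_n(z)=p_n'(z)/p_n(z)$, it uses the distributional identity
\[
\frac{1}{2\pi n}\int_{\mathbb{C}} \bigl(\log|L_n(z)|\bigr)\,\Delta\varphi(z)\,d\lambda(z)=\frac{1}{n}\sum_{j=1}^{n-1}\varphi\bigl(\zeta_j^{(n)}\bigr),
\]
and then invokes a Tao--Vu dominated-convergence-in-probability lemma, which requires two inputs: (a) $\frac{1}{n}\log|L_n(z)|\to 0$ in probability for a.e.\ $z\in\mathbb{D}$ (this is where condition (ii) enters, via the truncated power series expansion of $L_n$), and (b) tightness of $\frac{1}{n^2}\int_{\mathbb{D}_{1-\eps}}\log^2|L_n(z)|\,d\lambda(z)$, established with the Poisson--Jensen formula and condition (i). Your approach is both shorter and sharper: a single application of Jensen's formula to $L_n$ on $\overline{\mathbb{D}}_{R}$, together with the trivial upper bound $|L_n|\leq 2n/\eps$ on the circle and the anti-concentration of $|L_n(0)|$ from condition (i), already forces $\#\{\zeta_j^{(n)}\in\mathbb{D}_{1-\eps}\}=O_{\eps,\delta}(\log n)$ on an event of probability at least $1-\eta$. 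In particular, your argument shows that condition (ii) is not needed at all for this lemma, and it yields the stronger quantitative conclusion that only $O_\eps(\log n)$ critical points can lie in $\mathbb{D}_{1-\eps}$; the paper's method does not extract such a count. The only minor points to tidy are cosmetic: note that the implicit constant in your $O_{\eps,\varphi}(\log n/n)$ also depends on $\delta$ (harmless, since $\delta$ is fixed once $\eta$ is chosen), and one should remark that the possibility of zeros of $L_n$ on the circle $|z|=R$ is handled either by the standard form of Jensen's formula or by perturbing $R$.
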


We also have the following alternative formulation of Lemma \ref{lemma:main}.

\begin{lemma}[Alternative formulation] \label{lemma:alternative}
For each $n \geq 1$, let $\theta_1^{(n)}, \ldots, \theta_n^{(n)}$ be random variables on $[0,2\pi)$, and set 
$$ p_n(z) := \prod_{j=1}^n (z - e^{i \theta_j^{(n)}}). $$
Let $\zeta_1^{(n)}, \ldots, \zeta_{n-1}^{(n)}$ be the zeros of $p_n'$.  Assume
\begin{enumerate}[(i)]
\item we have
$$ \lim_{\delta \searrow 0} \limsup_{n \to \infty} \Prob \left( \left| \sum_{j=1}^n e^{-i \theta_j^{(n)}} \right| \leq \delta \right) = 0, $$
\item for almost every $z \in \mathbb{D}$,
$$ \frac{1}{n} \log \left| \sum_{j=1}^n \frac{1}{z - e^{i \theta_j^{(n)}}} \right| \longrightarrow 0 $$
in probability as $n \to \infty$.
\end{enumerate}
Then, for any $0 < \eps < 1$ and for every infinitely differentiable function $\varphi:\mathbb{C} \to \mathbb{R}$ supported on $\mathbb{D}_{1-\eps}$,
$$ \frac{1}{n-1} \sum_{j=1}^{n-1} \varphi( \zeta_{j}^{(n)} ) \longrightarrow 0 $$
in probability as $n \to \infty$.  
\end{lemma}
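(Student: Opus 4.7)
The plan is to convert the sum over critical points into a logarithmic-potential integral, factor through the logarithmic derivative
\[ S_n(z) := \sum_{j=1}^n \frac{1}{z - e^{i\theta_j^{(n)}}} = \frac{p_n'(z)}{p_n(z)}, \]
and control the resulting integral using Jensen's formula together with hypothesis (i). The distributional identity $\Delta_z \log|z-w| = 2\pi \delta_w$ and Green's theorem give, for any smooth compactly supported $\varphi$,
\[ \frac{1}{n-1} \sum_{j=1}^{n-1} \varphi(\zeta_j^{(n)}) = \frac{1}{2\pi(n-1)} \int_{\mathbb{C}} \Delta \varphi(z) \log|p_n'(z)| \, dA(z), \]
the additive $\log n$ coming from the leading coefficient of $p_n'$ being killed by $\int \Delta\varphi \, dA = 0$. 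Now $S_n$ is holomorphic on $\mathbb{D}$ since its only poles lie on the unit circle, outside $\mathrm{supp}\,\varphi$. Writing $\log|p_n'| = \log|p_n| + \log|S_n|$ on $\mathbb{D}$ and applying the same distributional identity to $\log|p_n(z)| = \sum_j \log|z - e^{i\theta_j^{(n)}}|$ yields
\[ \int \Delta\varphi(z) \log|p_n(z)| \, dA(z) = 2\pi \sum_{j=1}^n \varphi(e^{i\theta_j^{(n)}}) = 0, \]
because every $e^{i\theta_j^{(n)}}$ lies outside $\mathrm{supp}\,\varphi \subseteq \overline{\mathbb{D}}_{1-\eps}$. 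The lemma is thereby reduced to proving $Y_n := \frac{1}{n-1} \int \Delta\varphi(z) \log|S_n(z)| \, dA(z) \to 0$ in probability.

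Next I would control $\log|S_n|$ on $\mathbb{D}_{1-\eps}$ via its positive and negative parts. The elementary bound $|z - e^{i\theta}| \geq \eps$ there yields $|S_n(z)| \leq n/\eps$, so $\log^+|S_n(z)| \leq \log(n/\eps)$ and $\int_{\mathbb{D}_{1-\eps}} \log^+|S_n| \, dA \leq \pi(1-\eps)^2 \log(n/\eps)$ deterministically. For the negative part I would invoke Jensen's formula for the holomorphic function $S_n$ on $\overline{\mathbb{D}}_r$, $r \leq 1-\eps$: on the event $\{S_n(0) \neq 0\}$ the sub-mean-value inequality
\[ \log|S_n(0)| \leq \frac{1}{2\pi} \int_0^{2\pi} \log|S_n(re^{i\theta})| \, d\theta \]
integrated against $2\pi r\,dr$ over $[0,1-\eps]$ produces the key estimate
\[ \int_{\mathbb{D}_{1-\eps}} \log|S_n(z)| \, dA(z) \geq \pi(1-\eps)^2 \log|S_n(0)|, \]
from which $\int_{\mathbb{D}_{1-\eps}} \log^-|S_n| \, dA \leq \pi(1-\eps)^2 \bigl[ \log(n/\eps) - \log|S_n(0)| \bigr]$.

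Finally I would invoke hypothesis (i). Since $S_n(0) = -\sum_j e^{-i\theta_j^{(n)}}$, given $\eta > 0$ there exists $\delta > 0$ with $\Prob(|S_n(0)| \leq \delta) < \eta$ for all $n$ sufficiently large. On the complementary event one has $-\log|S_n(0)| \leq |\log \delta|$, and the two integral bounds combine to give
\[ |Y_n| \leq \frac{\|\Delta\varphi\|_\infty\,(1-\eps)^2}{2(n-1)} \bigl[ 2 \log(n/\eps) + |\log\delta| \bigr] = O\!\bigl( \tfrac{\log n}{n} \bigr) \]
on that event. Hence $\limsup_n \Prob(|Y_n| > \eta') \leq \eta$ for every $\eta' > 0$, and sending $\eta \searrow 0$ yields $Y_n \to 0$ in probability, as desired. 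The main obstacle is controlling the logarithmic singularities of $\log|S_n(z)|$ at the (possibly many) zeros of $S_n$ in $\mathbb{D}_{1-\eps}$, which are precisely the interior critical points of $p_n$; Jensen's inequality, anchored by the anti-concentration hypothesis (i) at $z=0$, is what converts this apparently singular negative part into a tame $O(\log n)$ integral bound that dominates to zero after division by $n-1$. Hypothesis (ii) offers an alternative route in which the pointwise-in-$z$ convergence of $\tfrac{1}{n} \log|S_n(z)|$ to zero in probability drives a dominated-convergence-in-probability argument for the same integral, and I would expect a cleaner (if less quantitative) proof via this second route.
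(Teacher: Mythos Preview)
Your reduction to $Y_n=\frac{1}{n-1}\int \Delta\varphi\,\log|S_n|\,dA$ is exactly the paper's starting point, but from there your route is genuinely different and in fact more elementary. The paper invokes a dominated--convergence--in--probability lemma (their Lemma~\ref{lemma:TVtight}, from Tao--Vu): it uses hypothesis~(ii) to get pointwise convergence $\tfrac{1}{n}\log|S_n(z)|\to 0$ in probability for a.e.\ $z$, and separately proves tightness of $\tfrac{1}{n^2}\int_{\mathbb{D}_{1-\eps}}\log^2|S_n|\,dA$ via the Poisson--Jensen formula together with hypothesis~(i). Your argument bypasses both pieces. By integrating the sub-mean-value inequality for the subharmonic function $\log|S_n|$ over $\mathbb{D}_{1-\eps}$ you obtain the deterministic $L^1$ bound
\[
\int_{\mathbb{D}_{1-\eps}}\bigl|\log|S_n|\bigr|\,dA \;\le\; \pi(1-\eps)^2\bigl(2\log(n/\eps)-\log|S_n(0)|\bigr),
\]
valid whenever $S_n(0)\neq 0$, and then hypothesis~(i) alone handles the one random term $-\log|S_n(0)|$. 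This is correct: $S_n$ is holomorphic and not identically zero on $\mathbb{D}$, so $\log|S_n|$ is subharmonic and locally integrable there, and Jensen's formula gives the displayed inequality. The upshot is that your proof actually shows hypothesis~(ii) is superfluous for this lemma, whereas the paper's route uses both hypotheses. What the paper's approach buys is a template that generalizes to settings where the roots are not confined to a compact set (so that no uniform bound $|S_n|\le n/\eps$ is available on the support of $\varphi$), which is closer to the framework of Kabluchko's original argument; your approach buys a shorter, sharper proof in the present setting. Your closing remark about the ``alternative route'' via dominated convergence in probability is precisely the paper's method.
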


We will prove Lemmas \ref{lemma:main} and \ref{lemma:alternative} in Section \ref{sec:lemmas}.  We now complete the proof of Theorems \ref{thm:main} and \ref{thm:alternative} assuming Lemmas \ref{lemma:main} and \ref{lemma:alternative}.  We prove both theorems simultaneously.  

\begin{proof}[Proof of Theorems \ref{thm:main} and \ref{thm:alternative}]
Let $\zeta_1^{(n)}, \ldots, \zeta_{n-1}^{(n)}$ be the zeros of $p_n'$.  In view of Lemma \ref{lemma:radii}, it suffices to show that
$$ \frac{1}{n-1} \sum_{j=1}^{n-1} (1 - |\zeta_j^{(n)}|) \longrightarrow 0 $$
in probability as $n \to \infty$.  

Let $0 < \eps < 1/2$.  Let $\varphi:\mathbb{C} \to [0,1]$ be an infinitely differentiable function such that $\varphi$ takes the value $1$ on $\mathbb{D}_{1 - 2\eps}$ and takes the value zero on $\mathbb{C} \setminus \mathbb{D}_{1 - \eps}$.  By Lemma \ref{lemma:main} (alternatively, Lemma \ref{lemma:alternative}), we have
\begin{equation} \label{eq:phiprob}
	\frac{1}{n-1} \sum_{j=1}^{n-1} \varphi( \zeta_j^{(n)} ) \longrightarrow 0
\end{equation} 
in probability as $n \to \infty$.  

On the other hand, by the Gauss--Lucas theorem (Theorem \ref{thm:gauss}), it follows that 
$$ \sup_{n \geq 1} \max_{1 \leq j \leq n} |\zeta_j^{(n)}| \leq 1. $$
Thus, we have
\begin{align*}
	\frac{1}{n-1} \sum_{j=1}^{n-1} (1 - |\zeta_j^{(n)}| ) &= \frac{1}{n-1} \sum_{j=1}^{n-1} (1 - |\zeta_j^{(n)}|) \indicator{ \zeta_j^{(n)} \in \mathbb{D}_{1 - 2 \eps}} \\
		&\qquad\qquad + \frac{1}{n-1} \sum_{j=1}^{n-1} (1 - |\zeta_j^{(n)}|) \indicator{ \zeta_j^{(n)} \notin \mathbb{D}_{1 - 2 \eps}} \\
		&\leq \frac{1}{n-1} \sum_{j=1}^{n-1} \indicator{ \zeta_j^{(n)} \in \mathbb{D}_{1 - 2 \eps}} + 2 \eps \\
		&\leq \frac{1}{n-1} \sum_{j=1}^{n-1} \varphi(\zeta_j^{(n)}) + 2 \eps.
\end{align*}
Since $\eps$ was arbitrary, the claim now follows from \eqref{eq:phiprob}.  
\end{proof}

\section{Proof of Lemmas \ref{lemma:main} and \ref{lemma:alternative}} \label{sec:lemmas}

It remains to verify Lemmas \ref{lemma:main} and \ref{lemma:alternative}.  The proof is based on a connection with logarithmic potential theory.  In particular, we will exploit the following formula from \cite[Section 2.4.1]{HKPV}: for every analytic function $f$ which does not vanish identically, 
\begin{equation} \label{eq:log}
	\frac{1}{2 \pi} \Delta \log |f| = \sum_{z \in \mathbb{C} : f(z) = 0} \mathcal{N}_f(z) \delta_z,
\end{equation}
where $\mathcal{N}_f(z)$ is the multiplicity of the zero at $z$ and $\delta_z$ is the unit point mass at $z$.  Here $\Delta$ is the Laplace operator, which should be interpreted in the distributional sense.  Similar methods also appeared in \cite{K,KZ,TV}.  In fact, our overall strategy is based on the arguments presented in \cite{K}.  

Let $\theta_1^{(n)}, \ldots, \theta_n^{(n)}$, $\zeta_1^{(n)}, \ldots, \zeta_{n-1}^{(n)}$, and $p_n$ be as in Lemma \ref{lemma:main} (alternatively, Lemma \ref{lemma:alternative}).  Consider the logarithmic derivative of $p_n$:
\begin{equation} \label{eq:defLn}
	L_n(z) := \frac{p_n'(z)}{p_n(z)} = \sum_{j=1}^{n} \frac{1}{z - e^{i \theta_j^{(n)}}}. 
\end{equation}
Let $0 < \eps < 1$, and let $\varphi: \mathbb{C} \to \mathbb{R}$ be an infinitely differentiable function supported on $\mathbb{D}_{1 - \eps}$.  In view of \eqref{eq:log}, we have
$$ \frac{1}{2 \pi n} \int_{\mathbb{C}} \left( \log |L_n(z)| \right) \Delta \varphi(z) d \lambda(z) = \frac{1}{n} \sum_{j=1}^{n-1} \varphi(\zeta_j^{(n)}) - \frac{1}{n} \sum_{j=1}^n \varphi( e^{i \theta_j^{(n)}}), $$
where $\lambda$ denotes Lebesgue measure on $\mathbb{C}$.  Since $\varphi$ is supported on $\mathbb{D}_{1-\eps}$, the above equality becomes
$$ \frac{1}{2 \pi n} \int_{\mathbb{C}} \left( \log |L_n(z)| \right) \Delta \varphi(z) d \lambda(z) = \frac{1}{n} \sum_{j=1}^{n-1} \varphi(\zeta_j^{(n)}). $$
Therefore, the proof of Lemma \ref{lemma:main} (alternatively, Lemma \ref{lemma:alternative}) reduces to showing that
\begin{equation} \label{eq:convshow}
	\frac{1}{n} \int_{\mathbb{C}} \left( \log |L_n(z)| \right) \Delta \varphi(z) d \lambda(z) \longrightarrow 0 
\end{equation}
in probability as $n \to \infty$.  

In order to verify \eqref{eq:convshow}, we will need the following result from \cite{TV}.  

\begin{lemma}[Lemma 3.1 from \cite{TV}] \label{lemma:TVtight}
Let $(X,\mathcal{A}, \nu)$ be a finite measure space.  Let $f_1, f_2, \ldots: X \to \mathbb{R}$ be random functions which are defined over a probability space $(\Omega,\mathcal{B},\Prob)$ and are jointly measurable with respect to $\mathcal{A} \otimes \mathcal{B}$.  Assume that:
\begin{enumerate}[(i)]
\item for $\nu$--almost every $x \in X$, $f_n(x)$ converges in probability to zero as $n \to \infty$,
\item for some $\delta > 0$, the sequence $\int_{X} |f_n(x)|^{1+\delta} d \nu(x)$ is tight.
\end{enumerate}
Then $\int_{X} f_n(x) d \nu(x)$ converges in probability to zero as $n \to \infty$.  
\end{lemma}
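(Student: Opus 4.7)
The plan is to combine a truncation argument with the finiteness of $\nu$, using the $(1+\delta)$-tightness as a stand-in for uniform integrability in a randomized setting. Fix $\eps > 0$, and for each threshold $K > 0$ decompose
$$ f_n(x) = f_n^{(K)}(x) + g_n^{(K)}(x), \quad f_n^{(K)}(x) := f_n(x) \mathbf{1}_{\{|f_n(x)| \leq K\}}, \quad g_n^{(K)}(x) := f_n(x) \mathbf{1}_{\{|f_n(x)| > K\}}. $$
The idea is that hypothesis (ii) will control the tail part $g_n^{(K)}$ uniformly in $n$ once $K$ is large, while the bounded bulk $f_n^{(K)}$ can be handled by bounded/dominated convergence thanks to the pointwise-in-probability convergence in hypothesis (i).

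For the tail, a direct Chebyshev-type estimate gives
$$ \left| \int_X g_n^{(K)}(x)\, d\nu(x) \right| \leq \int_X |f_n(x)|^{1+\delta} |f_n(x)|^{-\delta} \mathbf{1}_{\{|f_n(x)| > K\}} \, d\nu(x) \leq K^{-\delta} \int_X |f_n(x)|^{1+\delta}\, d\nu(x). $$
By tightness, for any $\eta > 0$ one can pick $M$ so that $\Prob(\int |f_n|^{1+\delta} d\nu > M) < \eta$ for every $n$, and then pick $K$ so that $K^{-\delta} M < \eps/2$. On the high-probability event $\{\int |f_n|^{1+\delta} d\nu \leq M\}$ we obtain $|\int g_n^{(K)} d\nu| \leq \eps/2$.

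For the bulk, observe that $|f_n^{(K)}(x)| \leq K$ pointwise, and that $f_n^{(K)}(x) \to 0$ in probability for $\nu$-a.e.\ $x$ (as follows from hypothesis (i) together with the truncation). The bounded convergence theorem for convergence in probability then yields $\E|f_n^{(K)}(x)| \to 0$ for $\nu$-a.e.\ $x$. Since this quantity is uniformly bounded by $K$ and $\nu(X) < \infty$, Lebesgue's dominated convergence on $(X,\nu)$ gives $\int_X \E|f_n^{(K)}(x)|\, d\nu(x) \to 0$. By Fubini (justified by joint measurability and the bound just established) and Markov's inequality, $\int_X f_n^{(K)}(x)\, d\nu(x) \to 0$ in probability. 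Combining the bulk and tail bounds, $\Prob(|\int f_n\, d\nu| > \eps) \leq \eta$ for $n$ large, and letting $\eta \searrow 0$ completes the argument.

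The principal conceptual obstacle is recognizing that pointwise convergence in probability alone cannot be passed through an integral (there is no deterministic dominating function, so classical dominated convergence fails). The role of the $(1+\delta)$-tightness is precisely to supply the missing uniform integrability, and the truncation is what converts that tightness into an actual $L^1$-type estimate. Once this perspective is adopted, every remaining step — truncation, bounded convergence in probability, $\nu$-dominated convergence, and Fubini/Markov — is routine.
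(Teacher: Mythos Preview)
The paper does not supply its own proof of this lemma; it is quoted verbatim as Lemma~3.1 of Tao--Vu~\cite{TV} and used as a black box. Your truncation argument is correct and is essentially the proof given in the cited source: split $f_n$ into a bounded bulk and a tail, control the tail via the $(1+\delta)$-moment tightness, and handle the bulk by bounded convergence in probability followed by dominated convergence in $\nu$ and Markov's inequality.
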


In order to apply Lemma \ref{lemma:TVtight}, we will show that $\frac{1}{n} \log |L_n(z)|$ converges in probability to zero for a.e. $z \in \mathbb{D}_{1-\eps}$ and that the sequence $\frac{1}{n^2} \int_{\mathbb{D}_{1-\eps}} \log^2 |L_n(z)| d\lambda(z)$ is tight.  To this end, we define
\begin{align*}
	\log_- x := \left\{
	\begin{array}{lr}
		|\log x|, & 0 \leq x \leq 1\\
		0, & x > 1,
	\end{array} \right. \qquad
	\log_+ x := \left\{
	\begin{array}{lr}
		0 & 0 \leq x \leq 1\\
		\log x, & x > 1.
	\end{array} \right. 
\end{align*}
From \eqref{eq:defLn} it follows that $L_n(z)$ is finite for all $z \in \mathbb{D}$.  Moreover, $L_n(z) = 0$ only when $p'_n(z) = 0$; in this case, $\log_- |L_n(z)| = \infty$.  

\subsection{Pointwise convergence of $L_n(z)$}

This subsection is devoted to the following lemma.  

\begin{lemma} \label{lemma:pointwise}
If, for almost every $z \in \mathbb{D}$,
\begin{equation} \label{eq:conditionpointwise}
	\lim_{\delta \searrow 0} \limsup_{n \to \infty} \Prob \left( \left| \sum_{m=0}^{\lfloor \log^2 n \rfloor} z^m \sum_{j=1}^n e^{-i \theta_j^{(n)} (m+1)} \right| \leq \delta \right) = 0, 
\end{equation}
then, for almost every $z \in \mathbb{D}$,
$$ \frac{1}{n} \log |L_n(z)| \longrightarrow 0 $$
in probability as $n \to \infty$.  
\end{lemma}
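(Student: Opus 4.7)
The plan is to sandwich $\frac{1}{n}\log|L_n(z)|$ between a deterministic upper bound that tends to zero and a probabilistic lower bound that follows from the anti-concentration hypothesis. For the upper bound, I would simply observe that if $|z|<1$ then
$$|L_n(z)| \leq \sum_{j=1}^n \frac{1}{|z-e^{i\theta_j^{(n)}}|} \leq \frac{n}{1-|z|},$$
so $\frac{1}{n}\log|L_n(z)| \leq \frac{1}{n}\log\frac{n}{1-|z|} \to 0$ deterministically. Hence the real content is to show that, for every $\eta>0$ and a.e.\ $z\in\mathbb{D}$, $\Prob(|L_n(z)| < e^{-n\eta}) \to 0$.

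To access hypothesis \eqref{eq:conditionpointwise}, I would expand each summand of $L_n$ as a geometric series: for $|z|<1$,
$$\frac{1}{z-e^{i\theta}} = -\sum_{m=0}^\infty z^m e^{-i\theta(m+1)},$$
so $L_n(z) = -\sum_{m=0}^\infty z^m T_{n,m}$ with $T_{n,m} := \sum_{j=1}^n e^{-i\theta_j^{(n)}(m+1)}$. Writing $N := \lfloor\log^2 n\rfloor$ and $S_n(z) := \sum_{m=0}^N z^m T_{n,m}$ (the quantity whose magnitude is controlled by \eqref{eq:conditionpointwise}), the remainder obeys
$$|L_n(z)+S_n(z)| \leq \sum_{m>N}|z|^m\cdot n = \frac{n|z|^{N+1}}{1-|z|}.$$
Since $n|z|^{\log^2 n} = \exp(\log n - |\log|z||\log^2 n)$ decays super-polynomially for any fixed $z\in\mathbb{D}$, this remainder tends to zero as $n\to\infty$.

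Now fix $\eta>0$ and any $\delta>0$. For $n$ sufficiently large, both $e^{-n\eta}$ and the tail estimate above are at most $\delta/2$. On the event $\{|L_n(z)| < e^{-n\eta}\}$ the triangle inequality then gives $|S_n(z)| \leq |L_n(z)| + |L_n(z)+S_n(z)| < \delta$, whence
$$\limsup_{n\to\infty}\Prob\!\left(\tfrac{1}{n}\log|L_n(z)| < -\eta\right) \leq \limsup_{n\to\infty}\Prob(|S_n(z)| \leq \delta).$$
Letting $\delta\searrow 0$, the right-hand side vanishes by hypothesis \eqref{eq:conditionpointwise}, which combined with the deterministic upper bound yields $\frac{1}{n}\log|L_n(z)| \to 0$ in probability.

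I do not anticipate a substantive obstacle: the hypothesis has been engineered precisely so that, once one observes that $L_n(z)$ coincides with $-S_n(z)$ up to a super-polynomially small tail, the implication is nearly automatic. The only mild care required is the order of quantifiers: one must fix $\delta$ first (independently of $n$) and only then send $n\to\infty$ so that both $e^{-n\eta}$ and the geometric tail fit below $\delta$. The $\log^2 n$ truncation leaves enormous slack for this step and, as noted in the remark following the statement of Theorem \ref{thm:main}, could comfortably be replaced by $(\log n)^{1+\eps}$.
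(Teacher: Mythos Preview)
Your proposal is correct and follows essentially the same route as the paper: the geometric-series expansion of $L_n(z)$, the truncation at $N=\lfloor\log^2 n\rfloor$, the super-polynomially small tail bound, and the deterministic upper bound $|L_n(z)|\le n/(1-|z|)$ all appear in the paper's proof. The only cosmetic difference is that the paper passes through the inequality $\log_-|L_n(z)|\le 1/|L_n(z)|$ before invoking the anti-concentration hypothesis, whereas you work directly with the event $\{|L_n(z)|<e^{-n\eta}\}$; your formulation is arguably slightly cleaner.
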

\begin{proof}
For $|z| < 1$, we have, by Fubini's theorem, 
\begin{align*}
	L_n(z) &= -\sum_{j=1}^n \frac{1}{e^{i \theta_j^{(n)}}} \frac{1}{1 - \frac{z}{e^{i \theta_j^{(n)}}}} = -\sum_{j=1}^n \frac{1}{e^{i \theta_j^{(n)}}} \sum_{m=0}^\infty \frac{z^m}{e^{i \theta_j^{(n)} m}} = - \sum_{m=0}^\infty z^m \sum_{j=1}^n e^{-i \theta_j^{(n)} (m+1)} \\
		&= -\sum_{m=0}^\infty z^m T_m^{(n)}(z), 
\end{align*}
where
$$ T_m^{(n)}(z) := \sum_{j=1}^n e^{-i \theta_j^{(n)} (m+1)}. $$
Here the use of Fubini's theorem is justified since
$$ \sum_{m=0}^\infty \sum_{j=1}^n |z|^m \left| e^{-i \theta_j^{(n)} (m+1)} \right| \leq n \sum_{m=0}^\infty |z|^m < \infty $$
for all $|z| < 1$ and every $n \geq 1$.  

Let $0 < \eps < 1$, and fix $z \in \mathbb{D}$ with $|z| \leq 1- \eps$ such that \eqref{eq:conditionpointwise} holds.  Set $N := \lfloor \log^2 n \rfloor$.  We can then write
\begin{equation} \label{eq:useful}
	|L_n(z)| = \left| \sum_{m=0}^N z^m T_m^{(n)}(z) + \sum_{m={N+1}}^\infty z^m T_m^{(n)}(z) \right|.
\end{equation}
Since $|T_m^{(n)}(z)| \leq n$ for all integers $m \geq 0$, it follows that 
\begin{equation} \label{eq:Lnlastupper}
	\left| \sum_{m={N+1}}^\infty z^m T_m^{(n)}(z) \right| \leq \frac{n |z|^{N+1}}{1-|z|} \leq \frac{n (1-\eps)^{N+1}}{\eps}. 
\end{equation}
In addition, we observe that
\begin{equation} \label{eq:Lnupper}
	|L_n(z)| \leq \sum_{j=1}^n \left| \frac{1}{z - e^{i \theta_j^{(n)} }} \right| \leq \frac{n}{\eps}.
\end{equation}

Since $\eps$ is arbitrary, it suffices to show that $\frac{1}{n} \log |L_n(z)|$ converges to zero in probability.  Since 
$$ \log |L_n(z)| = \log_+ |L_n(z)| - \log_- |L_n(z)|, $$
it suffices to show that both $\frac{1}{n} \log_+ |L_n(z)|$ and  $\frac{1}{n} \log_- |L_n(z)|$ converge to zero in probability.  

For the first term, we have
$$ 0 \leq \frac{1}{n} \log_+ |L_n(z)| \leq \frac{1}{n} \log \left( \frac{n}{\eps} \right) $$
by \eqref{eq:Lnupper}.  Thus, $\frac{1}{n} \log_+ |L_n(z)|$ converges to zero a.s.  

It now suffices to show that, for any $\eta > 0$, 
$$ \lim_{n \to \infty} \Prob \left( \frac{1}{n} \log_{-} |L_n(z)| > \eta \right) = 0. $$
However, from \eqref{eq:useful} and \eqref{eq:Lnlastupper}, we observe that
\begin{align*}
	\log_- |L_n(z)| &\leq \frac{1}{|L_n(z)|} \\
		&= \frac{1}{ \left| \sum_{m=0}^N z^m T_m^{(n)}(z) + \sum_{m=N+1}^\infty z^m T_m^{(n)}(z) \right|} \\
		&\leq \frac{1}{ \left| \sum_{m=0}^N z^m T_m^{(n)}(z) \right| - \frac{n (1-\eps)^{N+1}}{\eps} }
\end{align*}
provided 
$$ \left| \sum_{m=0}^N z^m T_m^{(n)}(z) \right| > \frac{n (1-\eps)^{N+1}}{\eps}. $$
Thus, we obtain
\begin{align*}
	 \Prob &\left( \frac{1}{n} \log_{-} |L_n(z)| > \eta \right) \\
	 &\qquad \leq \Prob \left( \frac{1}{n} \log_{-} |L_n(z)| > \eta \text{ and } \left| \sum_{m=0}^N z^m T_m^{(n)}(z) \right| > \frac{n (1-\eps)^{N+1}}{\eps} \right) \\
	 &\qquad\qquad + \Prob \left( \left| \sum_{m=0}^N z^m T_m^{(n)}(z) \right| \leq \frac{n (1-\eps)^{N+1}}{\eps} \right) \\
	 &\qquad \leq 2\Prob \left( \left| \sum_{m=0}^N z^m T_m^{(n)}(z) \right| \leq \frac{1}{n \eta} + \frac{n (1-\eps)^{N+1}}{\eps} \right).
\end{align*}
Since
$$ \lim_{n \to \infty} \left( \frac{1}{n \eta} + \frac{n (1-\eps)^{N+1}}{\eps} \right) = 0, $$
the claim now follows from \eqref{eq:conditionpointwise}.  
\end{proof}

\subsection{Tightness}

This subsection is devoted to proving the following lemma.    

\begin{lemma} \label{lemma:tight}
If 
\begin{equation} \label{eq:sumcongsmall}
	\lim_{\delta \searrow 0} \limsup_{n \to \infty} \Prob \left( \left| \sum_{j=1}^n e^{-i \theta_j^{(n)}} \right| \leq \delta \right) = 0, 
\end{equation}
then, for any $0 < \eps < 1$, the sequence $\frac{1}{n^2} \int_{\mathbb{D}_{1-\eps}} \log^2 |L_n(z)| d \lambda(z)$ is tight.  
\end{lemma}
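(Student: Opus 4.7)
The plan is to establish a deterministic pointwise bound
$$\int_{\mathbb{D}_{1-\eps}} \log^2 |L_n(z)|\, d\lambda(z) \leq C_\eps n^2,$$
from which tightness of $\frac{1}{n^2}\int_{\mathbb{D}_{1-\eps}}\log^2|L_n|\,d\lambda$ is immediate. The key is to exploit the factorization
$$L_n(z) = \frac{p_n'(z)}{p_n(z)},$$
which yields $\log^2|L_n| \leq 2\log^2|p_n'| + 2\log^2|p_n|$, and then to bound each piece separately. The Gauss--Lucas theorem (Theorem \ref{thm:gauss}) will play a crucial role by confining all critical points to $\overline{\mathbb{D}}$.

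For $\log^2|p_n(z)|$: since $z \in \mathbb{D}_{1-\eps}$ and $|e^{i\theta_j^{(n)}}|=1$, one has $\eps \leq |z - e^{i\theta_j^{(n)}}| \leq 2$ for every $j$, so by the triangle inequality $|\log |p_n(z)|| \leq n \max(|\log\eps|,\log 2)$ pointwise. Integrating the square over $\mathbb{D}_{1-\eps}$ contributes only $O_\eps(n^2)$.

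For $\log^2|p_n'(z)|$: I would write $p_n'(z) = n\prod_{j=1}^{n-1}(z-\zeta_j^{(n)})$ where $\zeta_1^{(n)},\dots,\zeta_{n-1}^{(n)}$ are the critical points of $p_n$. By Gauss--Lucas these all satisfy $|\zeta_j^{(n)}|\leq 1$. Then $\log|p_n'(z)|=\log n+\sum_{j=1}^{n-1}\log|z-\zeta_j^{(n)}|$, and Cauchy--Schwarz applied to the sum of $n-1$ terms gives
$$\log^2|p_n'(z)| \leq 2(\log n)^2 + 2(n-1)\sum_{j=1}^{n-1}\log^2|z-\zeta_j^{(n)}|.$$
Because the singularity $\log^2|w|$ is locally integrable in two real dimensions, the constant
$$K := \sup_{|\zeta|\leq 1}\int_{\mathbb{D}_{1-\eps}}\log^2|z-\zeta|\,d\lambda(z)$$
is finite; integrating termwise yields $\int \log^2|p_n'|\,d\lambda \leq C\bigl((\log n)^2 + (n-1)^2 K\bigr) = O(n^2)$.

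The only substantive obstacle is handling the logarithmic singularities of $\log|z-\zeta_j^{(n)}|$ near the critical points; the Cauchy--Schwarz trick, combined with the local $L^2$-integrability of $\log|w|$ and the Gauss--Lucas confinement $|\zeta_j^{(n)}|\leq 1$, disposes of it cleanly. Everything else is the crude two-sided bound on the various $|z-w|$ factors. Notably, this plan yields tightness deterministically without invoking hypothesis \eqref{eq:sumcongsmall}; the hypothesis is presumably stated here for parallelism with Lemma \ref{lemma:pointwise}, where the same quantity $|L_n(0)|=|\sum_j e^{-i\theta_j^{(n)}}|$ must be controlled from below to establish pointwise convergence.
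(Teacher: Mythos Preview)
Your proof is correct and in fact simpler than the paper's. The paper follows the template of Kabluchko's argument: it applies the Poisson--Jensen formula on a slightly larger disk $\overline{\mathbb{D}}_R$ with $R=1-\eps/2$, writing $\log|L_n(z)| = I_n(z) + \sum_{l} \log\bigl|R(z-\zeta_l^{(n)})/(R^2 - \bar\zeta_l^{(n)} z)\bigr|$, bounds the Blaschke-type sum deterministically by $O(n^2)$ after integration, and then controls $\int|I_n(z)|^2$ by reducing to $I_n(0)$ via two-sided bounds on the Poisson kernel. The hypothesis \eqref{eq:sumcongsmall} enters precisely at that last step, since $I_n(0) \geq \log|L_n(0)| = \log\bigl|\sum_j e^{-i\theta_j^{(n)}}\bigr|$ is the only available lower bound.

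Your route instead exploits the factorization $L_n = p_n'/p_n$ directly: $p_n$ has all roots on the unit circle and hence bounded away from $\mathbb{D}_{1-\eps}$, while $p_n'$ has all roots in $\overline{\mathbb{D}}$ by Gauss--Lucas, so the only singularities are the locally $L^2$-integrable $\log|z-\zeta_j^{(n)}|$. This yields the deterministic bound $\frac{1}{n^2}\int_{\mathbb{D}_{1-\eps}}\log^2|L_n|\,d\lambda \leq C_\eps$, which is strictly stronger than tightness and, as you note, does not use \eqref{eq:sumcongsmall} at all. The paper's Poisson--Jensen approach is more robust in settings where the poles of $L_n$ are not separated from the region of integration (as in Kabluchko's original paper with arbitrary $\mu$), but in the present unit-circle setting your argument is both shorter and yields more.
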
 

The proof of Lemma \ref{lemma:tight} is based on the arguments presented in \cite{K}.

\begin{proof}[Proof of Lemma \ref{lemma:tight}]
Let $0 < \eps < 1$.  Define $R := 1-\eps/2$.  Note that $L_n(z)$ has no poles in the closed disk $\overline{\mathbb{D}}_R$.  Let $\zeta_1^{(n)}, \ldots, \zeta_{k_n}^{(n)}$ denote the zeros of $L_n$ in $\mathbb{D}_R$, where $k_n \leq n$.  So by the Poisson--Jensen formula (see, for instance, \cite[Chapter II.8]{Ma}), for $z:= r e^{i\theta} \in \mathbb{D}_{1 - \eps}$ other than a zero, we have
\begin{equation} \label{eq:poissonjensen}
	\log |L_n(z)| = I_n(z) + \sum_{l=1}^{k_n} \log \left| \frac{ R(z - \zeta_l^{(n)})}{R^2 - \bar{\zeta}_l^{(n)}z} \right|, 
\end{equation}
where
\begin{equation} \label{eq:def:In}
	I_n(z) := \frac{1}{2\pi} \int_{0}^{2 \pi} \log |L_n(Re^{i \phi})| P(z,Re^{i \phi}) d \phi 
\end{equation}
and
\begin{equation} \label{eq:def:P}
	P(z, Re^{i \phi}) := \frac{R^2 - r^2}{R^2 + r^2 - 2 R r \cos(\theta - \phi)}, \qquad r < R. 
\end{equation}

Thus, by the Cauchy--Schwarz inequality, we have
\begin{align*}
	\left( \log |L_n(z)| \right)^2 &\leq 2 | I_n(z) |^2 + 2 \left( \sum_{l=1}^{k_n} \log \left| \frac{ R(z - \zeta_l^{(n)})}{R^2 - \bar{\zeta}_l^{(n)}z} \right| \right)^2 \\
		&\leq 2 | I_n(z) |^2 + 2k_n \sum_{l=1}^{k_n} \log^2 \left| \frac{ R(z - \zeta_l^{(n)})}{R^2 - \bar{\zeta}_l^{(n)}z} \right|.  
\end{align*}
Hence, we conclude that
\begin{align} \label{eq:tightmain}
	\frac{1}{n^2} \int_{\mathbb{D}_{1 - \eps}} \log^2 |L_n(z)| d \lambda(z) &\leq \frac{2}{n^2} \int_{\mathbb{D}_{1-\eps}} |I_n(z)|^2 d \lambda(z) \\
		&\qquad\qquad+ \frac{2k_n}{n^2} \sum_{l=1}^{k_n} \int_{\mathbb{D}_{1 - \eps}} \log^2 \left| \frac{ R(z - \zeta_l^{(n)})}{R^2 - \bar{\zeta}_l^{(n)}z} \right| d \lambda(z). \nonumber
\end{align}

Observe that 
$$ \inf_{z \in \mathbb{D}_{1-\eps}} \inf_{y \in \mathbb{D}_R} |R^2 - yz| \geq C_1, $$
where $C_1 > 0$ depends only on $\eps$.  Similarly, 
$$ \sup_{z \in \mathbb{D}_{1 - \eps}} \sup_{y \in \mathbb{D}_R} |R^2 - yz| \leq 2. $$
Thus, for any $y \in \mathbb{D}_R$, we obtain
\begin{align*}
	\int_{\mathbb{D}_{1- \eps}} \log^2 \left| \frac{R (z-y)}{R^2 - yz} \right| d\lambda(z) &\leq 2 \int_{\mathbb{D}_{1 - \eps}} \log^2 |R(z-y)| d\lambda(z) \\
		&\qquad\qquad + 2 \int_{\mathbb{D}_{1 - \eps}} \log^2 |R^2 - yz| d\lambda(z) \\
		&\leq 4 \pi \log^2 |R| + 4 \int_{\mathbb{D}_{1 - \eps}} \log^2 |z- y| d\lambda(z) \\
		&\qquad\qquad + 4 \int_{\mathbb{D}_{1 - \eps}} \left( \frac{1}{|R^2 - yz|^2} + |R^2 - yz|^2 \right) d \lambda(z) \\
		&\leq 4 \pi \log^2 |R| + 4 \int_{\mathbb{D}_{1 - \eps}} \log^2 |z - y| d \lambda(z) \\
		&\qquad\qquad + 4 \pi \left( \frac{1}{C_1^2} + 4 \right) \\
		&\leq C_2,
\end{align*}
where $C_2 > 0$ depends only on $\eps$.  Here we used that $\log | \cdot |$ is square integrable as well as the bound $\log^2 |x| \leq \frac{2}{|x|^2} + 2 |x|^2$.  Therefore, we conclude that
$$ \sup_{y \in \mathbb{D}_R} \int_{\mathbb{D}_{1 - \eps}} \log^2 \left| \frac{R (z-y)}{R^2 - yz} \right| d\lambda(z) \leq C_2, $$
and hence (since $k_n \leq n$)
$$ \frac{k_n}{n^2} \sum_{l=1}^{k_n} \int_{\mathbb{D}_{1 - \eps}} \log^2 \left| \frac{R(z - \zeta_l^{(n)})}{R^2 - \bar{\zeta}_l^{(n)} z } \right| d \lambda(z) \leq C_2. $$

Thus, in view of \eqref{eq:tightmain}, it suffices to show that
$$ \frac{1}{n^2} \int_{\mathbb{D}_{1 - \eps}} |I_n(z)|^2 d \lambda(z) $$
is tight.  

We recall that, for $z = r e^{i \theta}$, 
$$ I_n(z) = \frac{1}{2 \pi} \int_{0}^{2 \pi} \log |L_n(R e^{i \phi})| \frac{R^2 - r^2}{R^2 + r^2 - 2 R r \cos (\theta - \phi)} d \phi. $$
We now observe that, for any $\theta, \phi \in [0,2 \pi)$ and every $0 \leq r \leq 1 - \eps$, we have
$$ C_3' \leq R^2 - r^2 \leq C_3 $$
and
$$ C_4' \geq R^2 + r^2 - 2 R r \cos(\theta - \phi) \geq (R - r)^2 \geq C_4, $$
where $C_3, C_3', C_4, C_4' > 0$ depend only on $\eps$.  

We write
\begin{align*}
	I_n(z) &= \frac{1}{2 \pi} \int_{0}^{2 \pi} \log_+ |L_n(R e^{i \phi})| P(z,Re^{i\phi}) d \phi \\
		&\qquad\qquad - \frac{1}{2 \pi} \int_{0}^{2 \pi} \log_- |L_n(R e^{i \phi})| P(z,Re^{i\phi}) d \phi. 
\end{align*}
In particular, 
\begin{align*}
	I_n(z) &\leq \frac{1}{2 \pi} \int_{0}^{2 \pi} \log_+ |L_n(R e^{i \phi})| P(z,Re^{i\phi}) d \phi \\
		&\leq \frac{C_3}{C_4} \frac{1}{2 \pi} \int_{0}^{2 \pi} \log_+ |L_n(Re^{i \phi})| d \phi. 
\end{align*}
On the other hand, 
\begin{align*}
	I_n(z) &\geq \frac{C_3'}{C_4'} \frac{1}{2 \pi} \int_{0}^{2 \pi} \log_+ |L_n(R e^{i \phi})| d \phi - \frac{C_3}{C_4} \frac{1}{2 \pi} \int_{0}^{2 \pi} \log_- |L_n(Re^{i \phi})| d \phi \\
		&= \frac{C_3}{C_4} \frac{1}{2 \pi} \int_{0}^{2 \pi} \log |L_n(R e^{i \phi})| d \phi + \left( \frac{C_3'}{C_4'} - \frac{C_3}{C_4} \right) \frac{1}{2 \pi} \int_{0}^{2 \pi} \log_+ |L_n(R e^{i \phi})| d \phi \\
		&\geq \frac{C_3}{C_4} I_n(0) - \left| \frac{C_3'}{C_4'} - \frac{C_3}{C_4} \right| \frac{1}{2 \pi} \int_{0}^{2 \pi} \log_+ |L_n(R e^{i \phi})| d \phi 
\end{align*}
by definition of $I_n(0)$ (see \eqref{eq:def:In} and \eqref{eq:def:P}).  Since 
$$ \log_+ |L_n(Re^{i \phi})| \leq \log_+ \left( \sum_{j=1}^n \left| \frac{1}{ e^{i \theta_j^{(n)}} - R e^{i \phi}} \right| \right) \leq \log \left( \frac{2 n}{\eps} \right) $$
uniformly in $\phi$, we conclude that
$$ I_n(z) \leq \frac{C_3}{C_4} \log \left( \frac{2 n}{\eps} \right) $$
and
$$ I_n(z) \geq \frac{C_3}{C_4} I_n(0) - \left| \frac{C_3'}{C_4'} - \frac{C_3}{C_4} \right| \log \left( \frac{2 n}{\eps} \right) $$
for all $z \in \mathbb{D}_{1 - \eps}$.  

As 
$$ \lim_{n \to \infty} \frac{1}{n} \log \left( \frac{2 n }{\eps} \right) = 0, $$
it suffices to show that $\frac{1}{n} I_n(0)$ is bounded below in probability.  

From \eqref{eq:poissonjensen}, we observe that
$$ I_n(0) \geq \log |L_n(0)| $$
since 
$$ \sum_{l = 1}^{k_n} \log \left| \frac{\zeta_l^{(n)}}{R} \right| \leq 0. $$
Thus, for any $\eta > 0$, we have
\begin{align*}
	\Prob \left( \frac{1}{n} I_n(0) \leq -\eta \right) &\leq \Prob \left( \frac{1}{n} \log |L_n(0)| \leq \eta \right) \\
		&\leq \Prob\left( |L_n(0)| \leq e^{- \eta n} \right) \\
		&= \Prob\left( \left| \sum_{j=1}^n e^{- i \theta_j^{(n)}} \right| \leq e^{- \eta n} \right). 
\end{align*}
From \eqref{eq:sumcongsmall}, we obtain that, for any $\eta > 0$, 
$$ \lim_{n \to \infty} \Prob \left( \frac{1}{n} I_n(0) \leq -\eta \right) = 0. $$

Combining the bounds above, we conclude that the sequence 
$$ \frac{1}{n^2} \int_{ \mathbb{D}_{1 - \eps}} |I_n(z)|^2 d \lambda(z) $$
is tight, and the proof is complete.  
\end{proof}

\subsection{Completing the proof of Lemmas \ref{lemma:main} and \ref{lemma:alternative}}

We now complete the proof of Lemmas \ref{lemma:main} and \ref{lemma:alternative}.  Indeed, in view of Lemma \ref{lemma:TVtight}, the proof reduces to showing that 
\begin{enumerate}[(i)]
\item for a.e. $z \in \mathbb{D}$, $\frac{1}{n} \log |L_n(z)|$ converges in probability to zero as $n \to \infty$,
\item for any $0 < \eps < 1$, the sequence $\frac{1}{n^2} \int_{\mathbb{D}_{1 - \eps}} \log^2 |L_n(z)| d \lambda(z)$ is tight.
\end{enumerate}
Thus, Lemma \ref{lemma:main} follows from Lemmas \ref{lemma:pointwise} and \ref{lemma:tight}.  Lemma \ref{lemma:alternative} follows from Lemma \ref{lemma:tight} as the convergence of $\frac{1}{n} \log |L_n(z)|$ to zero is assumed in the statement of the lemma.

\end{document}